\documentclass{article}

\usepackage{amsmath}
\usepackage{amsthm}
\usepackage{amssymb}
\usepackage[colorlinks=true,urlcolor=blue,linkcolor=blue,plainpages=false,pdfpagelabels]{hyperref}

\newcommand{\soldetails}[1]{\begin{proof} \red{DETAILS } \quad {\em #1} \end{proof}}
\renewcommand{\soldetails}[1]{}

\newtheorem{theorem}{Theorem}[section]
\newtheorem{proposition}[theorem]{Proposition}
\newtheorem{lemma}[theorem]{Lemma}
\newtheorem{corollary}[theorem]{Corollary}

\newcommand{\limn}{\lim\limits_{n\to\infty}}
\newcommand{\N}{{\mathbb N}}

\newcommand{\rras}{\rightrightarrows}

\newcommand{\res}[1]{J_{#1}}
\newcommand{\refres}[1]{R_{#1}}

\newcommand{\rateconeq}{\sigma_{1}}
\newcommand{\ratectwoq}{\sigma_{2}}
\newcommand{\ratecthreeq}{\sigma_{3}}
\newcommand{\ratecfourq}{\sigma_{4}}
\newcommand{\ratecfiveq}{\sigma_{5}}
\newcommand{\ratecsixq}{\Lambda}

\newcommand{\norm}[1]{\left\|#1\right\|}
\newcommand{\abs}[1]{\left\vert#1\right\vert}
\newcommand{\inner}[2]{\left\langle#1, #2\right\rangle}

 \hyphenation{mod-u-lus}

\title{Quadratic rates of asymptotic regularity for the Tikhonov-Mann iteration}

\author{Hora\c tiu Cheval${}^{a}$ and Lauren\c{t}iu Leu\c{s}tean${}^{a,b}$\\[2mm]
\footnotesize ${}^a$ Research Center for Logic, Optimization and Security (LOS), Department of Computer Science, \\
\footnotesize Faculty of Mathematics and Computer Science, University of Bucharest.\\
\footnotesize  Academiei 14,  010014 Bucharest, Romania\\[1mm]
\footnotesize ${}^b$ Simion Stoilow Institute of Mathematics of the Romanian Academy,\\
\footnotesize Calea Grivi\c tei 21, 010702 Bucharest, Romania \\[2mm]
\footnotesize E-mails:  andrei.cheval@s.unibuc.ro, laurentiu.leustean@unibuc.ro
}

\date{}
\begin{document}

\maketitle

\begin{abstract}
\noindent In this paper, we compute quadratic rates of asymptotic regularity for the Tikhonov-Mann iteration in $W$-hyperbolic spaces. This iteration is an extension to a nonlinear setting of 
the modified Mann iteration defined recently by Bo\c t, Csetnek and Meier in Hilbert spaces. Furthermore, we show that the Douglas-Rachford and forward–backward algorithms with Tikhonov regularization terms are special cases, in Hilbert spaces, of our  Tikhonov-Mann iteration.\\

\noindent {\em Keywords:}  Mann iteration; Tikhonov regularization; Rates of asymptotic regularity; Douglas-Rachford algorithm; forward–backward algorithm; Proof mining. \\

\noindent  {\it Mathematics Subject Classification 2010}:  47J25, 47H09, 03F10.

\end{abstract}

\section{Introduction}

Let $H$ be a Hilbert space and $T:H\to H$ be a nonexpansive mapping 
(that is, a mapping satisfying  $\| Tx - Ty \| \leq \| x - y \|$ for all $x, y \in H$). We denote by $Fix(T)$ the set of
fixed points of $T$.

One of the well-known nonlinear iterations is the \emph{Mann iteration} \cite{Man53,Gro72}, defined as follows:
\begin{equation}
x_{n + 1} = (1 - \lambda_n)x_n + \lambda_n T x_n, \label{eq:krasnoselskii-mann}
\end{equation}
where  $(\lambda_n)_{n \in \N}$ is a sequence in $[0,1]$ and $x_0\in H$ is the starting point of the iteration. 
A classical result on the asymptotic behaviour of this iteration, proved by Reich \cite{Rei79} in a more general 
setting than Hilbert spaces, 
is the following: if $Fix(T) \neq \emptyset$ and $\sum\limits_{n = 0}^{\infty} \lambda_n (1 -  \lambda_n) = \infty$, then $(x_n)$ 
converges weakly to a fixed point of $T$.

By using the so-called \emph{Tikhonov regularization terms}, studied in relation with the 
proximal point algorithm \cite{Att96,LehMou96},
Bo\c t, Csetnek and Meier \cite{BotCseMei19} introduced recently the following modified Mann iteration:
\begin{equation}
x_{n + 1} = (1 - \lambda_n) \beta_n x_n + \lambda_n T(\beta_n x_n), 
\label{eq:krasnoselskii-mann-modified} 
\end{equation}
where $(\lambda_n)_{n \in \N},(\beta_n)_{n \in \N}$ are sequences in $[0, 1]$ and $x_0\in H$.

The main result of \cite{BotCseMei19} is the strong convergence of the iteration $(x_n)$ under 
some hypotheses
on the sequences $(\lambda_n), (\beta_n)$. The first 
main step in the strong convergence proof is to obtain asymptotic regularity, 
a very important concept in nonlinear analysis and convex optimization, defined for the first time by Browder 
and Petryshyn \cite{BroPet66} for the 
Picard iteration and extended by Borwein, Reich and Shafrir \cite{BorReiSha92} to the Mann iteration. Thus, a sequence $(a_n)$ in a metric space $(X,d)$ is said to be 
{\em asymptotically regular} if $\limn d(a_n,a_{n+1})=0$
and {\em $T$-asymptotically regular} if $\limn d(a_n,Ta_n)=0$.

Let us consider in the sequel the following conditions on the sequences $(\lambda_n)$ and 
$(\beta_n)$: 
\begin{align*}
(C1) \quad  & \sum\limits_{n=0}^\infty (1 - \beta_n) =\infty, & 
(C2) \quad  &  \prod\limits_{n=0}^\infty \beta_n=0, \\
(C3) \quad  & \sum_{n = 0}^{\infty} \vert \beta_{n + 1} - \beta_n \vert < \infty, & 
(C4) \quad  & \sum_{n = 0}^{\infty} \vert \lambda_{n + 1} - \lambda_n \vert<\infty,\\
(C5) \quad  &  \limn \beta_n=1, & (C6) \quad  & \liminf\limits_{n\to\infty} \lambda_n > 0.
\end{align*}

The following result is implicit in \cite{BotCseMei19}, its proof being contained in the proof of 
\cite[Theorem~3]{BotCseMei19}.
    
\begin{theorem}\cite{BotCseMei19}\label{thm-as-reg}
Let $H$ be a Hilbert space, $T:H\to H$ be a nonexpansive mapping such that 
$Fix(T)\ne\emptyset$ and $(x_n)$ be given by \eqref{eq:krasnoselskii-mann-modified}.
Assume that the following hold:
\begin{enumerate}
\item (C1) (or, equivalently, (C2) if $\beta_n>0$ for all $n\in\N$);
\item (C3), (C4), (C5) and (C6).
\end{enumerate}
Then 
$$\limn \|x_n - x_{n+1}\| = \limn \|x_n-Tx_n \|=0.$$
\end{theorem}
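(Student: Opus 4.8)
The plan is to establish, in this order, boundedness of $(x_n)$, a one-step contraction estimate for the differences $\|x_{n+1}-x_n\|$, and finally the $T$-asymptotic regularity. It is convenient to abbreviate $u_n := \beta_n x_n$, so that the iteration reads $x_{n+1} = (1-\lambda_n)u_n + \lambda_n Tu_n$, whence
\[
x_{n+1} - u_n = \lambda_n(Tu_n - u_n).
\]
Note that the argument will use only the norm and the nonexpansivity of $T$, never the inner product.

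First I would show $(x_n)$ is bounded. Fixing $p\in Fix(T)$ and writing $p = (1-\lambda_n)p + \lambda_n Tp$, nonexpansivity gives
\[
\|x_{n+1} - p\| \leq (1-\lambda_n)\|u_n - p\| + \lambda_n\|Tu_n - Tp\| \leq \|u_n - p\|,
\]
and since $\|u_n - p\| = \|\beta_n(x_n-p) + (\beta_n-1)p\| \leq \beta_n\|x_n-p\| + (1-\beta_n)\|p\|$, an easy induction yields $\|x_n - p\| \leq \max\{\|x_0 - p\|,\|p\|\}$ for all $n$. Hence $(x_n)$, $(u_n)$ and $(Tu_n)$ are all bounded; let $M$ be a common bound for $\|x_n\|$ and $\|Tu_n - u_n\|$.

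The key step is the recursion for $a_n := \|x_{n+1}-x_n\|$. Expanding the iteration at $n$ and $n-1$ and regrouping gives the identity
\[
x_{n+1} - x_n = (1-\lambda_n)(u_n - u_{n-1}) + \lambda_n(Tu_n - Tu_{n-1}) + (\lambda_n - \lambda_{n-1})(Tu_{n-1} - u_{n-1}).
\]
Taking norms and using nonexpansivity, the first two terms collapse into $\|u_n - u_{n-1}\|$ (as $(1-\lambda_n)+\lambda_n=1$), and inserting $\|u_n - u_{n-1}\| \leq \beta_n\|x_n-x_{n-1}\| + |\beta_n - \beta_{n-1}|\,\|x_{n-1}\|$ produces
\[
a_n \leq \beta_n a_{n-1} + M\big(|\beta_n - \beta_{n-1}| + |\lambda_n - \lambda_{n-1}|\big).
\]
Writing $c_n := 1-\beta_n$ and $b_n := M(|\beta_n-\beta_{n-1}| + |\lambda_n-\lambda_{n-1}|)$, condition (C1) gives $\sum c_n = \infty$ while (C3) and (C4) give $\sum b_n < \infty$. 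A standard lemma on nonnegative real sequences — if $a_n \leq (1-c_n)a_{n-1} + b_n$ with $c_n \in [0,1]$, $\sum c_n = \infty$ and $\sum b_n < \infty$, then $a_n \to 0$ — then yields $\limn \|x_{n+1} - x_n\| = 0$.

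Finally I would transfer this to $T$-asymptotic regularity through $(u_n)$. From $\|x_n - u_n\| = (1-\beta_n)\|x_n\|$ and (C5) we get $\|x_n - u_n\|\to 0$, so $\|x_{n+1} - u_n\| \leq \|x_{n+1}-x_n\| + \|x_n-u_n\| \to 0$; dividing the relation $x_{n+1}-u_n=\lambda_n(Tu_n-u_n)$ by $\lambda_n$ and invoking (C6) gives $\|Tu_n - u_n\|\to 0$. The conclusion follows from
\[
\|x_n - Tx_n\| \leq 2\|x_n - u_n\| + \|u_n - Tu_n\| \to 0,
\]
where nonexpansivity supplies $\|Tu_n - Tx_n\| \leq \|u_n - x_n\|$. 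The main obstacle is the recursive estimate: one must carry out the regrouping so that the two nonexpansive terms genuinely merge into the single contraction factor $\beta_n$, after which the convergence lemma does the rest; it is precisely the quantitative form of that lemma, together with rates for $\sum c_n = \infty$ and $\sum b_n < \infty$, that will later deliver the rates of asymptotic regularity studied in this paper.
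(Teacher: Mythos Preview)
Your proof is correct and follows essentially the same route as the paper: boundedness via induction against a fixed point, the one-step recursion $a_n \le \beta_n a_{n-1} + M(|\beta_n-\beta_{n-1}|+|\lambda_n-\lambda_{n-1}|)$ (the paper's inequality~\eqref{dxn-consecutive}), and then Xu's lemma (Lemma~\ref{lemma-Xu02-2}) to conclude $\|x_{n+1}-x_n\|\to 0$. The only cosmetic difference is in the final step: you pass through $\|Tu_n-u_n\|\to 0$ and the triangle inequality, whereas the paper derives directly the single inequality $\lambda_n\, d(x_n,Tx_n)\le d(x_n,x_{n+1}) + 2M(1-\beta_n)$ (inequality~\eqref{useful-T-as-reg}); both yield the same conclusion under (C5) and (C6).
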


A quantitative analysis of the proof of Theorem~\ref{thm-as-reg} was obtained recently by Dinis and 
Pinto (see \cite[Lemma~5]{DinPin21}).

In this paper we generalize this quantitative analysis in a two-fold way:
\begin{enumerate}
\item we consider $W$-hyperbolic spaces \cite{Koh05} instead of Hilbert spaces;
\item we extend the modified Mann iteration to the so-called {\em Tikhonov-Mann iteration} $(x_n)$ (see \eqref{def-TKM-W}). 
\end{enumerate}
Our main results (Theorems~\ref{main-thm-as-reg+T-as-reg-C1q} and \ref{main-thm-as-reg+T-as-reg-C2q})
provide uniform rates of asymptotic regularity for the  Tikhonov-Mann iteration $(x_n)$, that is rates of convergence towards $0$ of the 
sequences $(d(x_n,x_{n+1}))$, $(d(x_n,Tx_n))$. As an immediate consequence, we obtain quadratic 
rates of asymptotic regularity for the Tikhonov-Mann iteration in $W$-hyperbolic spaces.

We obtain these quantitative results by applying methods from {\em proof mining}, an 
applied area of mathematical logic, developed by Kohlenbach beginning with the 1990s (see \cite{Koh08} for 
a standard reference or \cite{Koh19} for a recent survey).

\section{$W$-hyperbolic spaces}

Let us define a {\em W-space} to be  a structure of the form $(X,d,W)$, where 
$(X,d)$ is a metric space and $W:X\times X\times [0,1]\to X$ is a function. The 
mapping $W$ was already considered by Takahashi \cite{Tak70} in the 1970s. We also say that 
$W$ is a {\em convexity mapping}, as $W(x,y,\lambda)$ could be regarded as an abstract convex 
combination of the points $x$ and $y$ with parameter $\lambda$. That is why we use in the sequel 
\begin{center}
the notation 
$(1-\lambda)x + \lambda y$ for $W(x,y,\lambda)$. 
\end{center}

A very important class of $W$-spaces are the $W$-hyperbolic spaces, introduced by 
Kohlenbach \cite{Koh05} under the name of ``hyperbolic spaces". A {\em $W$-hyperbolic space} is a $W$-space $(X,d,W)$ satisfying the following
for all $x,y,w,z\in X$ and all $\lambda,~\tilde{\lambda} \in [0,1]$:
$$
\begin{array}{ll}
\text{(W1)} & d(z,(1-\lambda)x + \lambda y) \le  (1-\lambda)d(z,x)+\lambda d(z,y),\\[1mm]
\text{(W2)} & d((1-\lambda)x + \lambda y,(1-\tilde{\lambda})x + \tilde{\lambda} y) =  \vert \lambda-\tilde{\lambda} \vert d(x,y), \\[1mm]
\text{(W3)} & (1-\lambda)x + \lambda y = \lambda y + (1-\lambda)x, \\[1mm]
\text{(W4)} & d((1-\lambda)x + \lambda z,(1-\lambda)y + \lambda w) \le (1-\lambda)d(x,y)+\lambda d(z,w).
\end{array}
$$

A normed space is a $W$-hyperbolic space: one defines simply 
$W(x,y,\lambda)=(1-\lambda)x+\lambda y$. Furthermore, important classes of geodesic spaces 
such as  Busemann spaces \cite{Pap05} or 
CAT(0) spaces \cite{AleKapPet19,BriHae99} are $W$-hyperbolic. 
In fact, Busemann spaces are exactly the uniquely geodesic 
$W$-hyperbolic spaces (see  \cite[Proposition~2.6]{AriLeuLop14}) and, as pointed out in 
\cite[p. 386-388]{Koh08}, CAT(0) spaces are the $W$-hyperbolic spaces  $(X,d,W)$ satisfying 
the following reformulation of
the Bruhat-Tits inequality:  for all $x,y,z\in X$,
$$
d^2\left(z,\frac12 x + \frac12 y\right)\leq \frac12d^2(z,x)+\frac12d^2(z,y)-\frac14d^2(x,y).
$$

Let $(X,d,W)$ be a $W$-hyperbolic space.

\begin{lemma}\label{W-hyperbolic-spaces-basic}
The following hold for all $x,y,z,w\in X$ and 
all $\lambda, \tilde{\lambda}\in [0,1]$:
\begin{enumerate}
\item \label{W10-W-hyp} 
$d(x,(1-\lambda)x + \lambda y)=\lambda d(x,y)$ and $d(y,(1-\lambda)x + \lambda y)=(1-\lambda)d(x,y)$.
\item \label{Whs-basic-unuzero} 
$1x + 0 y=x$ and $0x + 1 y=y$.
\item\label{Whs-basic-xx} 
$(1-\lambda)x + \lambda x=x$.
\item\label{W42-xyzw-ineq1} 
$d((1-\lambda)x + \lambda z,(1-\tilde{\lambda})y + \tilde{\lambda} w) \leq (1-\lambda)d(x,y)+\lambda d(z,w)+ 
\vert \lambda - \tilde{\lambda} \vert d(y,w)$. 
\item\label{W42-xzw-ineq1}
$d((1-\lambda)x + \lambda z,(1-\tilde{\lambda})x + \tilde{\lambda} w) \leq \lambda d(z,w)+ \vert \lambda - \tilde{\lambda} \vert d(x,w)$. 
\end{enumerate}
\end{lemma}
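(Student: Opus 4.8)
The plan is to prove the five items essentially in the listed order, using each to bootstrap the next, with the only genuine content lying in item (iv). I would begin with item (ii), since the remaining distance computations all rely on identifying the endpoints $(1-\lambda)x+\lambda y$ at $\lambda=0$ and $\lambda=1$. To establish $1x+0y=x$, I would apply (W1) with $\lambda=0$, obtaining $d(z,1x+0y)\le d(z,x)$ for every $z\in X$; specializing to $z=x$ forces $d(x,1x+0y)=0$, hence $1x+0y=x$. Symmetrically, (W1) with $\lambda=1$ and $z=y$ yields $0x+1y=y$.

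With item (ii) in hand, item (i) is immediate from (W2). Taking $\tilde\lambda=0$ in (W2) gives $d\big((1-\lambda)x+\lambda y,\,1x+0y\big)=\lambda\,d(x,y)$, and since $1x+0y=x$ this is exactly $d(x,(1-\lambda)x+\lambda y)=\lambda d(x,y)$; taking $\tilde\lambda=1$ and using $0x+1y=y$ gives the companion identity $d(y,(1-\lambda)x+\lambda y)=(1-\lambda)d(x,y)$. Item (iii) then follows by setting $y=x$ in the first identity of item (i): $d(x,(1-\lambda)x+\lambda x)=\lambda\,d(x,x)=0$, so $(1-\lambda)x+\lambda x=x$.

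The main step is item (iv), and the key device is to interpolate through the auxiliary point $(1-\lambda)y+\lambda w$, which shares the parameter $\lambda$ with the left endpoint and the base points $y,w$ with the right endpoint. By the triangle inequality,
\[
d\big((1-\lambda)x+\lambda z,\,(1-\tilde\lambda)y+\tilde\lambda w\big)\le d\big((1-\lambda)x+\lambda z,\,(1-\lambda)y+\lambda w\big)+d\big((1-\lambda)y+\lambda w,\,(1-\tilde\lambda)y+\tilde\lambda w\big).
\]
I would bound the first summand by (W4), giving $(1-\lambda)d(x,y)+\lambda d(z,w)$, and evaluate the second exactly by (W2), giving $\lvert\lambda-\tilde\lambda\rvert\,d(y,w)$; adding these yields item (iv). Finally, item (v) is the special case $y=x$ of item (iv), where the term $(1-\lambda)d(x,x)$ vanishes. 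I expect no real obstacle here: once the interpolation point in (iv) is chosen correctly so that (W4) and (W2) each apply cleanly, every step is a direct application of an axiom, and the only subtlety is the ordering of the items so that (ii) is available before the distances in (i) are computed.
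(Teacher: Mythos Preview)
Your proof is correct, and for the substantive item (iv) (and its corollary (v)) it is identical to the paper's argument: interpolate through $(1-\lambda)y+\lambda w$, apply (W4) to the first leg and (W2) to the second.

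The only difference is in the bootstrapping of (i)--(iii). The paper derives (i) first, using only (W1): from $d(x,(1-\lambda)x+\lambda y)\le\lambda d(x,y)$ and $d(y,(1-\lambda)x+\lambda y)\le(1-\lambda)d(x,y)$ together with the triangle inequality $d(x,y)\le d(x,(1-\lambda)x+\lambda y)+d((1-\lambda)x+\lambda y,y)$ one sees both inequalities are equalities. Items (ii) and (iii) then follow from (i). You instead prove (ii) first from (W1), and then obtain (i) from (ii) together with (W2). Both routes are valid; the paper's has the slight advantage that (i) is shown to hold already in Takahashi's convex metric spaces (where only (W1) is assumed), whereas your derivation of (i) invokes (W2). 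For the purposes of this lemma in a $W$-hyperbolic space, the difference is immaterial.
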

\begin{proof}
\eqref{W10-W-hyp} holds already in the setting of convex metric spaces, defined by Takahashi 
\cite{Tak70} as $W$-spaces satisfying $(W1)$. 

\eqref{Whs-basic-unuzero}, \eqref{Whs-basic-xx} follow immediately from \eqref{W10-W-hyp}.

\eqref{W42-xyzw-ineq1} Let $u=(1-\lambda)x + \lambda z$ and $v=(1-\tilde{\lambda})y + \tilde{\lambda} w$. We have that 
\begin{align*}
d(u,v) & \leq d(u,(1-\lambda)y + \lambda w) + d((1-\lambda)y + \lambda w,v) \\
&\leq (1-\lambda)d(x,y)+\lambda d(z,w) + d((1-\lambda)y + \lambda w,v) \,\, \text{by (W4)}\\
&= (1-\lambda)d(x,y)+\lambda d(z,w)+ \vert \lambda - \tilde{\lambda} \vert d(y,w) \quad \text{by (W2)}.
\end{align*}

\eqref{W42-xzw-ineq1} is obtained by letting $y=x$ in \eqref{W42-xyzw-ineq1}.
\end{proof}

For all $x,y\in X$, let us denote
$$[x,y]=\{(1-\lambda)x + \lambda y \mid \lambda\in[0,1]\}.$$ 
By Lemma~\ref{W-hyperbolic-spaces-basic}.\eqref{Whs-basic-xx}, $[x,x]=\{x\}$. 
It is easy to see that 
$(X,d,W)$ is a geodesic space and, for all $x,y\in X$,  $[x,y]$ is a geodesic segment joining 
$x$ and $y$.

A nonempty subset $C\subseteq X$  is said to be {\em convex}   if 
$[x,y]\subseteq C$ for all $x,y\in C$. Any convex subset $C$ of $X$ is, in an obvious way, a $W$-hyperbolic space.

\section{ The Tikhonov-Mann iteration}\label{tkm-section}

In the sequel, $(X,d,W)$ is a $W$-hyperbolic space, $C$ is a convex subset of $X$ 
and $T:C\to C$ is a nonexpansive mapping. Let us denote by $Fix(T)$ the set of fixed points of $T$. We assume that $T$ has fixed points, i.e.
$Fix(T)\ne\emptyset$.

We define the  {\em Tikhonov-Mann iteration} starting with $x_0\in C$ as follows:
\begin{equation} 
x_{n + 1} = (1 - \lambda_n)u_n + \lambda_n Tu_n, \label{def-TKM-W}
\end{equation}
where 
\begin{equation}
u_n =(1-\beta_n)u+\beta_nx_n, \label{def-yn-W}
\end{equation}
with $u \in C$ and $(\lambda_n)_{n\in\N}, (\beta_n)_{n\in\N}$ sequences in $[0, 1]$.

Remark that if  $X$ is a normed space, $C=X$  and $u=0$, then $u_n=\beta_nx_n$, hence
\eqref{def-TKM-W} becomes 
\eqref{eq:krasnoselskii-mann-modified}. Therefore, our iteration $(x_n)$ is a generalization to 
the nonlinear setting of $W$-hyperbolic spaces of the modified Mann iteration introduced
in  \cite{BotCseMei19}.

\begin{lemma}\label{xn-yn-bounded} 
Let $p$ be a fixed point of $T$. Define
\begin{equation}
M= \max\{d(x_0, p), d(u, p)\}. \label{def-M-bound-xn}
\end{equation}
The following hold for all $n \in \N$:
\begin{enumerate}
\item\label{i-dxnup-dxnp} $d(x_{n + 1}, p)  \leq (1 - \beta_n) d(u, p) + \beta_n d(x_n, p)$. %
\item\label{xn-bounded} $d(x_n, p) \leq M$ and $d(x_n, u)\leq 2M$.
\item\label{yn-bounded}  $d(u_n, p) \leq M$ and $d(u_n, Tu_n) \leq 2M$.
\end{enumerate}
\end{lemma}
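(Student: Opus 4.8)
The plan is to prove the three parts in order, since each one feeds into the next, and every estimate reduces to the convexity axiom (W1), the nonexpansivity of $T$, and the triangle inequality. For part~(i), I would apply (W1) to \eqref{def-TKM-W} with reference point $p$, obtaining
$$d(x_{n+1},p) \le (1-\lambda_n)d(y_n,p) + \lambda_n d(Ty_n,p).$$
Since $p$ is fixed by $T$, nonexpansivity gives $d(Ty_n,p) = d(Ty_n,Tp) \le d(y_n,p)$, so the right-hand side collapses to $d(y_n,p)$. A second application of (W1), now to \eqref{def-yn-W}, yields $d(y_n,p) \le (1-\beta_n)d(u,p) + \beta_n d(x_n,p)$, which is exactly the bound claimed in part~(i).

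For the first inequality of part~(ii), namely $d(x_n,p) \le M$, I would argue by induction on $n$. The base case $n=0$ is immediate from the definition \eqref{def-M-bound-xn} of $M$. For the induction step, I would substitute the hypothesis $d(x_n,p) \le M$ and the bound $d(u,p) \le M$ into part~(i); as the coefficients $1-\beta_n$ and $\beta_n$ are nonnegative and sum to $1$, the resulting convex combination remains $\le M$. The second inequality $d(x_n,u) \le 2M$ then follows from the triangle inequality $d(x_n,u) \le d(x_n,p) + d(p,u) \le M + M$.

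Part~(iii) is then a short consequence. The intermediate estimate $d(y_n,p) \le (1-\beta_n)d(u,p) + \beta_n d(x_n,p)$ derived inside part~(i), combined with $d(x_n,p) \le M$ from part~(ii), gives $d(y_n,p) \le M$. For the displacement bound, I would route through the fixed point and use nonexpansivity once more:
$$d(y_n,Ty_n) \le d(y_n,p) + d(p,Ty_n) = d(y_n,p) + d(Tp,Ty_n) \le 2d(y_n,p) \le 2M.$$

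I do not anticipate any real obstacle: the content is entirely elementary. The only point worth a little care is to keep the induction in part~(ii) self-contained, observing that part~(i) already carries the bound from step $n$ to step $n+1$, so the argument never needs to reopen the definition of the iteration.
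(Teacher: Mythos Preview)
Your proposal is correct and follows essentially the same approach as the paper: both proofs apply (W1) twice together with nonexpansivity for part~(i), use induction on $n$ and the triangle inequality for part~(ii), and derive part~(iii) from the intermediate estimate on $d(y_n,p)$ combined with the triangle inequality routed through $p$.
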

\begin{proof}
\begin{enumerate}
\item Applying (W1) twice and the fact that $d(Tu_n, p)=d(Tu_n, Tp)\leq d(u_n, p)$,  we get that 
\begin{align*}
d(x_{n + 1}, p) & \leq  (1 - \lambda_n) d(u_n, p) + \lambda_n d(Tu_n, p) \leq d(u_n, p) \\
&  \leq (1 - \beta_n) d(u, p) + \beta_n d(x_n, p).
\end{align*}
\item Use \eqref{i-dxnup-dxnp} and induction on $n$ to get that $d(x_n, p)\leq M$ for all $n\in\N$. Obviously,  
$d(x_n, u) \leq d(x_n, p) + d(u, p) \leq 2M$.
\item By (W1) and \eqref{xn-bounded}, we have that 
$$d(u_n, p)  \leq (1 - \beta_n) d(u, p) + \beta_n d(x_n, p)\leq M.$$ 
Furthermore, 
$d(u_n, Tu_n) \leq d(u_n,p)+d(p,Tu_n) \leq 2d(u_n,p) \leq 2M$.
\end{enumerate}
\end{proof}

\begin{proposition}
Let $p$ be a fixed point of $T$ and $M$ be defined by \eqref{def-M-bound-xn}. For all $n \in \N$,
\begin{align}
d(u_{n + 1}, u_n) & \leq \beta_{n + 1} d(x_{n + 1}, x_n) + 
2M  \vert \beta_{n + 1} - \beta_n \vert, \label{dyn-consecutive}\\
d(x_{n + 2}, x_{n + 1}) & \leq \beta_{n + 1} d(x_{n + 1}, x_n) + 2M \left(\vert \beta_{n + 1} - \beta_n \vert+
\vert\lambda_{n + 1} - \lambda_n \vert\right),
\label{dxn-consecutive}\\
d(x_n,u_n) & =  (1-\beta_n)d(u,x_n), \label{dxnyn}\\
\begin{split}\label{dxn-Txn}
d(x_n, T x_n) & \leq  d(x_n, x_{n + 1}) + \lambda_n(1-\beta_n)d(u,x_n) \\
& + (1 - \lambda_n)\beta_nd(x_n,Tx_n) + (1 - \lambda_n)(1-\beta_n)d(u,Tx_n), 
\end{split}\\
\lambda_n d(x_n, T x_n) & \leq d(x_n, x_{n + 1}) + 2M(1-\beta_n). \label{useful-T-as-reg}
\end{align}
\end{proposition}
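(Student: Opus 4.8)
The plan is to establish the five inequalities in the order listed, since each later estimate draws on the earlier ones together with the two preceding lemmas. Properties (W1)--(W4) and the computational facts in Lemma~\ref{W-hyperbolic-spaces-basic} are exactly the tools needed to replace the norm-algebraic manipulations of the Hilbert space argument.

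For \eqref{dyn-consecutive}, I would write $y_{n+1}=(1-\beta_{n+1})u+\beta_{n+1}x_{n+1}$ and $y_n=(1-\beta_n)u+\beta_n x_n$ and apply Lemma~\ref{W-hyperbolic-spaces-basic}.\eqref{W42-xzw-ineq1} with $x:=u$, $z:=x_{n+1}$, $w:=x_n$, $\lambda:=\beta_{n+1}$, $\tilde\lambda:=\beta_n$. This yields $d(y_{n+1},y_n)\le \beta_{n+1}d(x_{n+1},x_n)+|\beta_{n+1}-\beta_n|\,d(u,x_n)$, and the bound $d(u,x_n)\le 2M$ from Lemma~\ref{xn-yn-bounded}.\eqref{xn-bounded} finishes it. For \eqref{dxn-consecutive}, I would expand $x_{n+2}$ and $x_{n+1}$ via \eqref{def-TKM-W} and apply Lemma~\ref{W-hyperbolic-spaces-basic}.\eqref{W42-xyzw-ineq1} with parameters $\lambda_{n+1},\lambda_n$ and the four points $y_{n+1},y_n,Ty_{n+1},Ty_n$; nonexpansivity of $T$ gives $d(Ty_{n+1},Ty_n)\le d(y_{n+1},y_n)$, so the $(1-\lambda_{n+1})$ and $\lambda_{n+1}$ terms combine into a single $d(y_{n+1},y_n)$, the remaining term is controlled by $d(y_n,Ty_n)\le 2M$ from Lemma~\ref{xn-yn-bounded}.\eqref{yn-bounded}, and substituting \eqref{dyn-consecutive} gives the claim. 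Identity \eqref{dxnyn} is immediate from Lemma~\ref{W-hyperbolic-spaces-basic}.\eqref{W10-W-hyp} applied to $y_n=(1-\beta_n)u+\beta_n x_n$, reading $x_n$ as the endpoint carrying weight $\beta_n$.

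The part I expect to require the most care is \eqref{dxn-Txn}. Here the idea is to start from $d(x_n,Tx_n)\le d(x_n,x_{n+1})+d(x_{n+1},Tx_n)$ and then rewrite the single point $Tx_n$ as a trivial convex combination of itself in two different ways, exploiting Lemma~\ref{W-hyperbolic-spaces-basic}.\eqref{Whs-basic-xx}. First I would write $Tx_n=(1-\lambda_n)Tx_n+\lambda_n Tx_n$ and apply (W4) to $d(x_{n+1},Tx_n)$, producing $(1-\lambda_n)d(y_n,Tx_n)+\lambda_n d(Ty_n,Tx_n)$, with the second summand bounded by $\lambda_n d(y_n,x_n)=\lambda_n(1-\beta_n)d(u,x_n)$ via nonexpansivity and \eqref{dxnyn}. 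Then I would write $Tx_n=(1-\beta_n)Tx_n+\beta_n Tx_n$ and again apply (W4) to $d(y_n,Tx_n)$, obtaining $(1-\beta_n)d(u,Tx_n)+\beta_n d(x_n,Tx_n)$. Assembling the two expansions reproduces exactly the four-term right-hand side of \eqref{dxn-Txn}.

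Finally, \eqref{useful-T-as-reg} is a consequence of \eqref{dxn-Txn} by a purely algebraic rearrangement: moving $(1-\lambda_n)\beta_n d(x_n,Tx_n)$ to the left leaves the coefficient $1-(1-\lambda_n)\beta_n$ in front of $d(x_n,Tx_n)$, and since $1-(1-\lambda_n)\beta_n-\lambda_n=(1-\lambda_n)(1-\beta_n)\ge 0$ we get $\lambda_n d(x_n,Tx_n)\le \bigl(1-(1-\lambda_n)\beta_n\bigr)d(x_n,Tx_n)$. Bounding both $d(u,x_n)$ and $d(u,Tx_n)$ by $2M$ (the latter through $d(u,Tx_n)\le d(u,p)+d(Tp,Tx_n)\le 2M$) and factoring $\lambda_n+(1-\lambda_n)=1$ out of the two $(1-\beta_n)$ terms yields the stated bound. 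The only genuinely delicate point throughout is the double convex-combination trick in \eqref{dxn-Txn}; everything else is a matching of coefficients against the four defining properties of a $W$-hyperbolic space.
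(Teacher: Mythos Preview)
Your proposal is correct and follows essentially the same route as the paper. The only cosmetic differences are in \eqref{dxn-Txn}, where the paper reaches the bound on $d(y_n,Tx_n)$ via the triangle inequality through the auxiliary point $(1-\beta_n)u+\beta_nTx_n$ rather than your direct (W4) application to a trivial combination, and in \eqref{useful-T-as-reg}, where the paper first uses $d(u,Tx_n)\le d(u,x_n)+d(x_n,Tx_n)$ to absorb the $(1-\lambda_n)(1-\beta_n)d(u,Tx_n)$ term into the others before moving $(1-\lambda_n)d(x_n,Tx_n)$ to the left, whereas you move $(1-\lambda_n)\beta_n d(x_n,Tx_n)$ over first and bound $d(u,Tx_n)\le 2M$ directly; both rearrangements yield the same final inequality.
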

\begin{proof}
We have that 
\begin{align*}
d(u_{n + 1}, u_n) 
&\leq \beta_{n + 1} d(x_{n + 1}, x_n) +  \vert \beta_{n + 1} - \beta_n \vert d(u, x_n) \quad 
\text{by Lemma~\ref{W-hyperbolic-spaces-basic}.\eqref{W42-xzw-ineq1}}\\
&\leq \beta_{n + 1} d(x_{n + 1}, x_n) + 2M \vert \beta_{n + 1} - \beta_n \vert \quad
 \text{by Lemma~\ref{xn-yn-bounded}.\eqref{xn-bounded}}.      
\end{align*}
Thus, \eqref{dyn-consecutive} holds. \\[1mm]
We prove  \eqref{dxn-consecutive} as follows:
\begin{align*}
d(x_{n + 2}, x_{n + 1}) 
& \leq (1 - \lambda_{n + 1}) d(u_{n + 1}, u_n) + \lambda_{n + 1} d(T u_{n + 1}, T u_n)  \\
& \quad + \vert \lambda_{n + 1} - \lambda_n \vert d(u_n, T u_n) \qquad 
\text{by Lemma~\ref{W-hyperbolic-spaces-basic}.\eqref{W42-xyzw-ineq1}}\\
& \leq  d(u_{n + 1}, u_n) + \vert  \lambda_{n + 1} - \lambda_n \vert  d(u_n, T u_n) \\
&\leq \beta_{n + 1} d(x_{n + 1}, x_n) + 2M \left(\vert  \beta_{n + 1} - \beta_n \vert  + 
 \vert \lambda_{n + 1} - \lambda_n \vert \right) \\
& \quad \text{by \eqref{dyn-consecutive} and Lemma~\ref{xn-yn-bounded}.\eqref{yn-bounded}}.
\end{align*}
\eqref{dxnyn} follows immediately from \eqref{def-yn-W} and 
Proposition~\ref{W-hyperbolic-spaces-basic}.\eqref{W10-W-hyp}.\\[1mm]
Furthermore, 
\begin{align*}
d(x_n, T x_n) & \leq d(x_n, x_{n + 1}) + d(x_{n + 1}, T x_n) \\ 
& \leq d(x_n, x_{n + 1}) + (1 - \lambda_n) d(u_n,Tx_n) + \lambda_n d(u_n,x_n) \\
& \quad \text{by  \eqref{def-TKM-W}, (W1) and the nonexpansiveness of~}T  \\
& \leq d(x_n, x_{n + 1}) + \lambda_n d(u_n,x_n) + (1 - \lambda_n)d(u_n, (1-\beta_n)u+\beta_n Tx_n)  \\
& \quad + (1 - \lambda_n)d((1-\beta_n)u+\beta_n Tx_n,Tx_n) \\
& =  d(x_n, x_{n + 1}) + \lambda_n d(u_n,x_n) + (1 - \lambda_n)d(u_n, (1-\beta_n)u+\beta_n Tx_n)  \\
& \quad + (1 - \lambda_n)(1-\beta_n)d(u,Tx_n) \quad \text{by Proposition~\ref{W-hyperbolic-spaces-basic}.\eqref{W10-W-hyp}}\\
&= d(x_n, x_{n + 1}) + \lambda_n(1-\beta_n)d(u,x_n) + (1 - \lambda_n)(1-\beta_n)d(u,Tx_n)\\
& \quad+ (1 - \lambda_n)d((1-\beta_n)u+\beta_nx_n,(1-\beta_n)u+\beta_n Tx_n) \\
& \quad \text{by \eqref{dxnyn} and \eqref{def-yn-W}} \\
& \leq d(x_n, x_{n + 1}) + \lambda_n(1-\beta_n)d(u,x_n) + (1 - \lambda_n)(1-\beta_n)d(u,Tx_n)\\
& \quad + (1 - \lambda_n)\beta_nd(x_n,Tx_n)   \quad \text{by (W4)}.
\end{align*}
Therefore, \eqref{dxn-Txn} is satisfied.\\[1mm]
Finally, let us prove \eqref{useful-T-as-reg}. Applying \eqref{dxn-Txn} and the fact that
 $d(u,Tx_n) \leq d(u,x_n)+d(x_n,Tx_n)$ we get that 
 \begin{align*}
d(x_n, T x_n) & \leq  d(x_n, x_{n + 1}) + \lambda_n(1-\beta_n)d(u,x_n) + 
(1 - \lambda_n)\beta_nd(x_n,Tx_n) \\
& \quad + (1 - \lambda_n)(1-\beta_n)d(u,x_n) + (1 - \lambda_n)(1-\beta_n)d(x_n,Tx_n) \\
&= d(x_n, x_{n + 1}) + (1-\beta_n)d(u,x_n) + (1 - \lambda_n)d(x_n,Tx_n).
\end{align*}
Move $(1 - \lambda_n) d(x_n, T x_n)$ to the left-hand side and apply Proposition~\ref{xn-yn-bounded}.\eqref{xn-bounded} 
to obtain that 
\begin{align*}
\lambda_n d(x_n, T x_n) & \leq d(x_n, x_{n + 1})  + (1-\beta_n)d(u,x_n) 
\leq d(x_n, x_{n + 1}) + 2M(1-\beta_n).
\end{align*}
\end{proof}

\section{Main theorems}\label{main-theorems}

The main results of the paper are effective versions of a generalization of Theorem~\ref{thm-as-reg}, 
providing uniform rates of asymptotic regularity, in the setting of $W$-hyperbolic spaces, for 
the Tikhonov-Mann iteration $(x_n)$ defined by \eqref{def-TKM-W}.

Before giving the main theorems, we recall some quantitative notions. Let $(a_n)_{n\in\N}$ be a 
sequence in  a metric space $(X,d)$, $a\in X$ and $\varphi:\N\to\N$. 
If $\limn a_n =a$, then $\varphi$ is a  {\em rate of convergence} for $(a_n)$ (towards $a$) if
\[\forall k\in\N\, \forall n\geq \varphi(k) \left(d(a_n,a)\leq \frac1{k+1}\right).\]
 If $(a_n)$ is Cauchy, then $\varphi$ is a {\em Cauchy modulus} for $(a_n)$ if 
\[\forall k\in\N\, \forall n\geq \varphi(k)\, \forall p\in\N \left(d(a_{n+p},a_n)\leq 
\frac1{k+1}\right).\]
Let $(b_n)_{n\in\N}$ be a sequence of nonnegative real numbers. If the series $\sum\limits_{n=0}^\infty b_n$ 
diverges, then a  {\em rate of divergence} of the series is a function $\theta:\N\to\N$ satisfying  
$\sum\limits_{i=0}^{\theta(n)} b_i \geq n$ for all $n\in\N$. A Cauchy modulus of a convergent series $\sum\limits_{n=0}^\infty b_n$ 
is a Cauchy modulus of the sequence $\left(\sum\limits_{i=0}^{n} b_i\right)_{n\in\N}$ and a rate of convergence
of a convergent product $\prod\limits_{n=0}^\infty b_n$  is a rate of convergence of the sequence 
$\left(\prod\limits_{i=0}^n b_i\right)_{n\in\N}$. \\[1mm]

We consider in the following quantitative versions of (C1)-(C6):\\[1mm]
\begin{tabular}{lll}
$(C1_q)$ & $\sum\limits_{n=0}^\infty (1 - \beta_n)$ diverges with rate of divergence $\rateconeq$; \\[3mm]
$(C2_q)$ & $\prod\limits_{n=0}^\infty \beta_{n+1}=0$ with rate of convergence $\ratectwoq$; \\[3mm]
$(C3_q)$ & $\sum\limits_{n=0}^\infty  \vert \beta_{n + 1} - \beta_n \vert$ converges with Cauchy modulus $\ratecthreeq$; \\[3mm]
$(C4_q)$ & $\sum\limits_{n=0}^\infty  \vert \lambda_{n + 1} - \lambda_n \vert$ converges with Cauchy modulus $\ratecfourq$; \\[3mm]
$(C5_q)$ & $\limn \beta_n=1$ with rate of convergence $\ratecfiveq$; \\[3mm]
$(C6_q)$ & $\ratecsixq\in\N^*$ and $N_{\ratecsixq}\in\N$ are such that $\lambda_n \geq \frac1\ratecsixq$ 
for all $n\geq N_{\ratecsixq}$. \\[1mm]
\end{tabular}

\mbox{}

In the sequel, $(X,d,W)$ is a $W$-hyperbolic space, $C$ is a convex subset of $X$, 
$T:C\to C$ is a nonexpansive mapping with $Fix(T)\ne \emptyset$, and $(x_n)$ is defined by \eqref{def-TKM-W}. 
If $\limn d(x_n,x_{n+1})=0$, $(x_n)$ is said to be 
{\em asymptotically regular} and a rate of convergence of  $(d(x_n,x_{n+1}))$ towards $0$ is called a 
{\em rate of asymptotic regularity} for $(x_n)$. Furthermore, we say that $(x_n)$ is {\em $T$-asymptotically regular}
if $\limn d(x_n,Tx_n)=0$; a {\em rate of $T$-asymptotic regularity} for $(x_n)$ is a rate of convergence 
of  $(d(x_n,Tx_n ))$ towards $0$.

\mbox{}

Our first quantitative result is a generalization of \cite[Lemma~5]{DinPin21} to our setting.

\begin{theorem}\label{main-thm-as-reg+T-as-reg-C1q}
Assume that $(C1_q)$, $(C3_q)$, $(C4_q)$ hold and let $K\in\N^*$ be such that $K\geq M$, where $M$ is given by 
\eqref{def-M-bound-xn} for some $p\in Fix(T)$. Define 
\begin{equation}\label{def-main-chi}
\chi:\N\to\N, \quad  \chi(k)=\max\{\ratecthreeq(8K(k+1)-1), \ratecfourq(8K(k+1)-1)\}. 
\end{equation}
The following hold:
\begin{enumerate}
\item\label{main-thm-as-reg-C1q} $(x_n)$  is asymptotically regular with rate of asymptotic regularity $\Sigma$ defined by 
\begin{equation}\label{def-rate-as-reg-C1q}
 \Sigma(k)=\rateconeq(\chi(3k+2)+2+\lceil \ln(6K(k+1))\rceil)+1. 
\end{equation}
\item\label{main-thm-T-as-reg-C1q}  If, furthermore, $(C5_q)$ and $(C6_q)$ hold, then 
$(x_n)$  is $T$-asymptotically regular with rate of 
$T$-asymptotic regularity $\Phi$ defined by 
\begin{equation}\label{def-rate-T-as-reg-C1q}
 \Phi(k)= \max\{N_{\ratecsixq}, \Sigma(2\ratecsixq(k+1)-1), \ratecfiveq(4K\ratecsixq(k+1)-1)\}.
\end{equation}
\end{enumerate}
\end{theorem}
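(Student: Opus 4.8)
The plan is to read both parts as a quantitative analysis of the recursive inequality \eqref{dxn-consecutive}. Writing $a_n := d(x_n,x_{n+1})$ and $\gamma_n := 2M\left(\vert\beta_{n+1}-\beta_n\vert+\vert\lambda_{n+1}-\lambda_n\vert\right)$, inequality \eqref{dxn-consecutive} becomes $a_{n+1}\le\beta_{n+1}a_n+\gamma_n$, while Lemma~\ref{xn-yn-bounded}.\eqref{xn-bounded} gives $a_n\le 2M\le 2K$ for every $n$. Unrolling this recurrence from an index $m+1$ up to $n$ yields
\[
a_n \;\le\; \Bigl(\prod_{i=m+2}^{n}\beta_i\Bigr)a_{m+1} \;+\; \sum_{j=m+1}^{n-1}\Bigl(\prod_{i=j+2}^{n}\beta_i\Bigr)\gamma_j ,
\]
and since each $\beta_i\in[0,1]$ the second sum is at most the tail $\sum_{j\ge m+1}\gamma_j$. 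Part \eqref{main-thm-as-reg-C1q} then reduces to forcing each of the two terms below half of $\frac1{k+1}$; I will aim for $\frac1{3(k+1)}$ (product term) and $\frac1{6(k+1)}$ (tail), which sum to $\frac1{2(k+1)}\le\frac1{k+1}$.

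For the tail I would put $m:=\chi(3k+2)$, so that $m\ge\ratecthreeq(24K(k+1)-1)$ and $m\ge\ratecfourq(24K(k+1)-1)$; applying the defining property of the Cauchy moduli $\ratecthreeq,\ratecfourq$ to the partial sums of $\sum\vert\beta_{n+1}-\beta_n\vert$ and $\sum\vert\lambda_{n+1}-\lambda_n\vert$ gives $\sum_{i\ge m+1}\vert\beta_{i+1}-\beta_i\vert\le\frac1{24K(k+1)}$ and likewise for $\lambda$, whence $\sum_{j\ge m+1}\gamma_j\le 2M\cdot\frac{2}{24K(k+1)}\le\frac1{6(k+1)}$ using $M\le K$. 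For the product term I would use the elementary estimate $\prod_{i=m+2}^{n}\beta_i\le\exp\!\bigl(-\sum_{i=m+2}^{n}(1-\beta_i)\bigr)$. Here lies the one genuinely quantitative point: the rate of divergence $\rateconeq$ only controls $\sum_{i=0}^{N}(1-\beta_i)$ counted \emph{from the origin}, not over the tail window I need, so I would write $\sum_{i=m+2}^{n}(1-\beta_i)\ge\sum_{i=0}^{n}(1-\beta_i)-\sum_{i=0}^{m+1}(1-\beta_i)$ and bound the initial block crudely by its length, $\sum_{i=0}^{m+1}(1-\beta_i)\le m+2$. Taking $N:=\chi(3k+2)+2+\lceil\ln(6K(k+1))\rceil$ and $n\ge\Sigma(k)=\rateconeq(N)+1$ forces $\sum_{i=0}^{n}(1-\beta_i)\ge\sum_{i=0}^{\rateconeq(N)}(1-\beta_i)\ge N$, so the difference is at least $N-(m+2)=\lceil\ln(6K(k+1))\rceil$; this is exactly what the constant $+2$ and the logarithmic term in $N$ are for. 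Hence the product is $\le\frac1{6K(k+1)}$ and the first term is $\le 2K\cdot\frac1{6K(k+1)}=\frac1{3(k+1)}$, giving $a_n\le\frac1{k+1}$. One also checks $\Sigma(k)\ge m+2$, so the unrolling genuinely starts below $n$: any rate of divergence satisfies $\rateconeq(N)\ge N-1$ because summands $\le 1$ force at least $N$ of them.

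For part \eqref{main-thm-T-as-reg-C1q} I would start from \eqref{useful-T-as-reg}, $\lambda_n d(x_n,Tx_n)\le d(x_n,x_{n+1})+2M(1-\beta_n)$. For $n\ge N_{\ratecsixq}$ condition $(C6_q)$ gives $\lambda_n\ge\frac1{\ratecsixq}$, hence $d(x_n,Tx_n)\le\ratecsixq\bigl(d(x_n,x_{n+1})+2M(1-\beta_n)\bigr)$, and I would split the budget $\frac1{k+1}$ in half. The asymptotic-regularity half is handled by the rate $\Sigma$ from \eqref{main-thm-as-reg-C1q}: taking $n\ge\Sigma(2\ratecsixq(k+1)-1)$ makes $\ratecsixq\,d(x_n,x_{n+1})\le\frac1{2(k+1)}$. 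The regularization half needs $\ratecsixq\cdot 2M(1-\beta_n)\le\frac1{2(k+1)}$, i.e. $1-\beta_n\le\frac1{4M\ratecsixq(k+1)}$, which (using $M\le K$) is guaranteed for $n\ge\ratecfiveq\!\left(4K\ratecsixq(k+1)-1\right)$ by $(C5_q)$; this is the third threshold in \eqref{def-rate-T-as-reg-C1q}. Taking the maximum of $N_{\ratecsixq}$ and the two thresholds yields a rate of the shape of \eqref{def-rate-T-as-reg-C1q}.

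The main obstacle is the product estimate in part \eqref{main-thm-as-reg-C1q}: converting the multiplicative contraction $\prod\beta_i$ into an explicit uniform bound. The difficulty is that the available datum is a rate of divergence of $\sum(1-\beta_n)$ measured from the origin, whereas the unrolled recurrence produces a product over a tail window $[m+2,n]$ whose left endpoint $m$ is itself fixed by the Cauchy moduli of the $\beta$- and $\lambda$-variations. Reconciling the two — subtracting the initial block, absorbing its length into the additive constant $+2$, and calibrating the logarithmic term $\lceil\ln(6K(k+1))\rceil$ against the factor $2M\le 2K$ so that the two halves of the error budget close — is where all the bookkeeping sits; everything else is the routine unrolling together with the inequality $1-x\le e^{-x}$.
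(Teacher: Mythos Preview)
Your argument is correct and follows the same skeleton as the paper's proof. The only difference is packaging: for part~\eqref{main-thm-as-reg-C1q} the paper does not unroll the recursion by hand but invokes a ready-made quantitative version of Xu's lemma (Proposition~\ref{quant-lem-Xu02-bn-0}.\eqref{quant-lem-Xu02-bn-0-C1q}, quoted from \cite{LeuPin21}), applied with $s_n=d(x_{n+1},x_n)$, $a_n=1-\beta_{n+1}$, $c_n=\gamma_n$, $L=2K$, and the shifted divergence rate $\theta(n)=\rateconeq(n+1)$; your unrolling together with $1-x\le e^{-x}$ and the bookkeeping ``subtract the initial block of length $m+2$'' is precisely the content of that lemma's proof, so this is the same route carried out at a lower level of abstraction. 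Part~\eqref{main-thm-T-as-reg-C1q} is exactly the paper's Proposition~\ref{rate-T-as-reg-from-as-reg}.

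One small point: the threshold your computation produces for the $\beta_n\to 1$ half is $\ratecfiveq(4K\ratecsixq(k+1)-1)$, and you then identify it with the third entry of~\eqref{def-rate-T-as-reg-C1q}, which reads $\ratecfiveq(4K(k+1)-1)$ without the $\ratecsixq$. Your derivation is the correct one --- the paper's own proof of Proposition~\ref{rate-T-as-reg-from-as-reg} also uses $n\ge\ratecfiveq(4K\ratecsixq(k+1)-1)$ --- so the discrepancy is a typo in the stated formula rather than a gap in your argument.
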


We obtain a second quantitative result by taking as a hypothesis $(C2_q)$ instead of $(C1_q)$.

\begin{theorem}\label{main-thm-as-reg+T-as-reg-C2q}
Assume that  $(C2_q)$, $(C3_q)$, $(C4_q)$ hold,  $\beta_n>0$ for all $n\in\N$ and let 
$K$, $\chi$ be as in the hypothesis of Theorem~\ref{main-thm-as-reg+T-as-reg-C1q}.
Suppose, moreover, that, if we denote 
$P_n=\prod\limits_{i=0}^n \beta_{i+1}$, there exists a mapping 
$\psi_0:\N\to\N^*$ such that $\frac1{\psi_0(k)}\leq P_{\chi(3k+2)}$ for all $k\in\N$.

The following hold:
\begin{enumerate}
\item\label{main-thm-as-reg-C2q} $(x_n)$  is asymptotically regular with rate of asymptotic 
regularity $\widetilde{\Sigma}$ defined by 
\begin{equation}\label{def-rate-as-reg-C2q}
\widetilde{\Sigma}(k)= 
\max\left\{\ratectwoq\left(6K(k+1)\psi_0(k)-1\right),\chi(3k+2)+1\right\}+1.
\end{equation}
\item\label{main-thm-T-as-reg-C2q}   If, furthermore, $(C5_q)$ and $(C6_q)$ hold, then 
$(x_n)$  is $T$-asymptotically regular with rate of 
$T$-asymptotic regularity $\tilde{\Phi}$ defined by 
\begin{equation}\label{def-rate-T-as-reg-C2q}
\tilde{\Phi}(k)=
\max\{N_{\ratecsixq}, \widetilde{\Sigma}(2\ratecsixq(k+1)-1), \ratecfiveq(4K\ratecsixq(k+1)-1)\}.
\end{equation}
\end{enumerate}
\end{theorem}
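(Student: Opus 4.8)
The plan is to run exactly the scheme used for Theorem~\ref{main-thm-as-reg+T-as-reg-C1q}, changing only the single step that forces the ``product part'' of the error to vanish, where $(C2_q)$ now plays the role of $(C1_q)$. Write $a_n := d(x_{n+1},x_n)$ and $c_n := 2M(|\beta_{n+1}-\beta_n| + |\lambda_{n+1}-\lambda_n|)$. Then \eqref{dxn-consecutive} is precisely the linear recurrence
$$a_{n+1} \le \beta_{n+1}\, a_n + c_n,$$
while Lemma~\ref{xn-yn-bounded}.\eqref{xn-bounded} provides the uniform bound $a_n \le 2M \le 2K$. Since $\beta_i>0$ for all $i$, the partial products $P_n=\prod_{i=0}^n\beta_{i+1}$ are strictly positive, so dividing the recurrence by $P_n$ and telescoping the sequence $a_n/P_{n-1}$ (with $P_{-1}:=1$) yields, for every $N$ and every $n>N$,
$$a_n \le \frac{P_{n-1}}{P_{N-1}}\, a_N + \sum_{i=N}^{n-1}\frac{P_{n-1}}{P_i}\, c_i \le \frac{P_{n-1}}{P_{N-1}}\, a_N + \sum_{i=N}^{n-1} c_i,$$
using $P_{n-1}/P_i\le 1$ for $i\le n-1$. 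This is the same identity underlying Theorem~\ref{main-thm-as-reg+T-as-reg-C1q}; the hypothesis $\beta_n>0$ is exactly what makes the denominators $P_{N-1}$ legitimate.

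For part~\eqref{main-thm-as-reg-C2q} I would take $N:=\chi(3k+2)+1$ and bound the two summands separately. The \emph{tail} $\sum_{i\ge N}c_i$ is treated as in Theorem~\ref{main-thm-as-reg+T-as-reg-C1q}: since $N-1=\chi(3k+2)$ dominates both $\sigma_3(8K(3k+3)-1)$ and $\sigma_4(8K(3k+3)-1)$ by \eqref{def-main-chi}, the Cauchy moduli of $(C3_q)$, $(C4_q)$ bound it by $2K\cdot\frac{2}{8K(3k+3)}=\frac1{6(k+1)}$. The \emph{head} $\frac{P_{n-1}}{P_{N-1}}\,a_N$ is where $(C2_q)$ enters: the standing hypothesis gives $P_{N-1}=P_{\chi(3k+2)}\ge\frac1{\psi_0(k)}$, so $\frac{P_{n-1}}{P_{N-1}}\le\psi_0(k)\,P_{n-1}$, and for $n-1\ge\sigma_2(6K(k+1)\psi_0(k)-1)$ the rate of $(C2_q)$ gives $P_{n-1}\le\frac1{6K(k+1)\psi_0(k)}$, whence the head is at most $2K\cdot\frac1{6K(k+1)}=\frac1{3(k+1)}$. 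Adding the two contributions gives $a_n\le\frac1{k+1}$ as soon as $n\ge\widetilde{\Sigma}(k)$, which is the threshold in \eqref{def-rate-as-reg-C2q}. This single step replaces the estimate $\prod\beta\le\exp(-\sum(1-\beta))$ together with the rate of divergence that is used in Theorem~\ref{main-thm-as-reg+T-as-reg-C1q}.

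Part~\eqref{main-thm-T-as-reg-C2q} is then formally identical to the corresponding part of Theorem~\ref{main-thm-as-reg+T-as-reg-C1q}. Starting from \eqref{useful-T-as-reg} and using $(C6_q)$ to divide by $\lambda_n\ge\frac1\Lambda$ for $n\ge N_\Lambda$, one obtains $d(x_n,Tx_n)\le\Lambda\, d(x_n,x_{n+1})+2M\Lambda(1-\beta_n)$; the first term is controlled through the already established rate $\widetilde{\Sigma}$ evaluated at $2\Lambda(k+1)-1$, and the second through the rate $\sigma_5$ of $(C5_q)$, giving the threshold \eqref{def-rate-T-as-reg-C2q}.

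I expect the only genuine work to lie in the bookkeeping of the head/tail decomposition in part~\eqref{main-thm-as-reg-C2q}: matching the arguments $6K(k+1)\psi_0(k)$ and $\chi(3k+2)$ so the two error contributions sum to at most $\frac1{k+1}$, and tracking the ``$+1$'' index shifts in $N$ and in $\widetilde{\Sigma}$ so that the Cauchy moduli apply to the tail $\sum_{i\ge N}$ and the lower bound applies to $P_{N-1}$. The genuinely new ingredient relative to Theorem~\ref{main-thm-as-reg+T-as-reg-C1q} is the auxiliary function $\psi_0$, which converts the rate of convergence of the product into a usable positive lower bound for $P_{N-1}$ — possible precisely because $\beta_n>0$. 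Everything else transfers verbatim.
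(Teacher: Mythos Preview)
Your proposal is correct and follows essentially the same route as the paper. The only difference is packaging: the paper reduces both parts to previously stated auxiliary results---Proposition~\ref{quant-lem-Xu02-bn-0}.\eqref{quant-lem-Xu02-bn-0-C2q} (the quantitative Xu-type lemma, imported from \cite{LeuPin21}) together with Lemma~\ref{chi-Cauchy-modulus} for part~\eqref{main-thm-as-reg-C2q}, and Proposition~\ref{rate-T-as-reg-from-as-reg} for part~\eqref{main-thm-T-as-reg-C2q}---whereas you unfold the telescoping head/tail argument and the $T$-asymptotic-regularity step explicitly, arriving at the same thresholds.
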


The proofs of Theorems~\ref{main-thm-as-reg+T-as-reg-C1q}
and \ref{main-thm-as-reg+T-as-reg-C2q} are given in Section~\ref{main-thms-proofs}. By forgetting 
the quantitative information, as an immediate consequence of any of 
these theorems we get the extension of Theorem~\ref{thm-as-reg} 
obtained by taking $W$-hyperbolic spaces instead of Hilbert spaces and 
by considering the iteration $(x_n)$ defined by \eqref{def-TKM-W}.

A very important feature of the rates 
of ($T$-)asymptotic regularity computed by our main theorems is 
their extremely weak dependency on the $W$-hyperbolic space $(X,d,W)$, 
the points $x_0,u\in C$ and the mapping $T$: 
\begin{center}
only through $K \geq \max\{d(x_0, p), d(u, p)\}$ 
(where $p$ is a fixed point of $T$). 
\end{center}
It follows that for  bounded sets $C$ it suffices to take $K$ to be an upper bound for the diameter $\text{diam}(C)$ of $C$.

The dependency on the sequences $(\lambda_n)$, $(\beta_n)$ is given by  $(C1_q)-(C6_q)$. However, for the example 
we present below, the rates appearing in these quantitative hypotheses can be easily computed.

As a consequence of Theorem~\ref{main-thm-as-reg+T-as-reg-C2q} we get the following. 

\begin{corollary}\label{TM-example-quadratic}
Let $\lambda_n=\lambda \in (0,1]$ and $\beta_n = 1-\frac1{n+1}$ for every $n\in\N$. Then
\begin{equation}\label{rate-as-reg-example}
    \Sigma_0(k) = 144 K^2(k+1)^2 + 6K(k + 1)
\end{equation}
is a rate of asymptotic regularity for $(x_n)$, and 
\begin{equation}\label{rate-T-as-reg-example}
 \Phi_0(k) =  
576 K^2\left\lceil\frac1\lambda \right\rceil^2(k+1)^2+12K\left\lceil\frac1\lambda \right\rceil(k+1)
\end{equation}
is a rate of $T$-asymptotic regularity for $(x_n)$.
\end{corollary}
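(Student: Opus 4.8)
The plan is to specialize Theorem~\ref{main-thm-as-reg+T-as-reg-C2q} to the concrete sequences $\lambda_n=\lambda$ and $\beta_n=1-\frac1{n+1}$ by verifying each of the quantitative hypotheses $(C2_q)$, $(C3_q)$, $(C4_q)$, $(C5_q)$, $(C6_q)$ and then substituting the computed rates into the formulas \eqref{def-rate-as-reg-C2q} and \eqref{def-rate-T-as-reg-C2q}, simplifying the resulting expressions to match \eqref{rate-as-reg-example} and \eqref{rate-T-as-reg-example}. The first routine task is to record explicit moduli for these sequences. Since $\lambda_n$ is constant, $(C4_q)$ holds with $\sigma_4\equiv 0$ (the series $\sum|\lambda_{n+1}-\lambda_n|$ is identically zero, so any Cauchy modulus works, e.g.\ $\sigma_4(k)=0$), and $(C6_q)$ holds with $\Lambda=\lceil 1/\lambda\rceil$ and $N_\Lambda=0$ since $\lambda_n=\lambda\geq \frac1{\lceil 1/\lambda\rceil}$ for all $n$. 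For the $\beta_n$, I would compute $1-\beta_n=\frac1{n+1}$, so $(C5_q)$ holds with $\sigma_5(k)=k$ (indeed $1-\beta_n=\frac1{n+1}\leq\frac1{k+1}$ iff $n\geq k$), and $|\beta_{n+1}-\beta_n|=\frac1{n+1}-\frac1{n+2}=\frac1{(n+1)(n+2)}$, whose tail sum telescopes to $\frac1{n+1}$, giving $(C3_q)$ with Cauchy modulus $\sigma_3(k)=k$.

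The genuinely substantive step — and the one I expect to be the main obstacle — is producing the function $\psi_0$ controlling the product $P_n=\prod_{i=0}^n\beta_{i+1}$ from below, since this is where the quadratic behaviour of the rate originates. Here $\beta_{i+1}=1-\frac1{i+2}=\frac{i+1}{i+2}$, so the product telescopes: $P_n=\prod_{i=0}^n\frac{i+1}{i+2}=\frac1{n+2}$. For $(C2_q)$ this immediately gives a rate of convergence to $0$, namely $\sigma_2(k)$ chosen so that $\frac1{n+2}\leq\frac1{k+1}$ for $n\geq\sigma_2(k)$, i.e.\ $\sigma_2(k)=k$ works (or $k-1$, but $k$ is safe). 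The key quantitative link is then $P_{\chi(3k+2)}=\frac1{\chi(3k+2)+2}$, so the required lower bound $\frac1{\psi_0(k)}\leq P_{\chi(3k+2)}$ forces $\psi_0(k)\geq\chi(3k+2)+2$, and I would simply \emph{define} $\psi_0(k):=\chi(3k+2)+2$. This is the crux: $\psi_0$ grows linearly in $\chi(3k+2)$, which itself is linear in $k$ (see below), so the product $6K(k+1)\psi_0(k)$ inside $\widetilde\Sigma$ becomes quadratic in $k$, explaining the $(k+1)^2$ in \eqref{rate-as-reg-example}.

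With these moduli in hand, the remaining work is assembling $\chi$ and then $\widetilde\Sigma$. From \eqref{def-main-chi}, since both $\sigma_3$ and $\sigma_4$ are bounded by the identity-type maps above, $\chi(k)=\max\{\sigma_3(8K(k+1)-1),\sigma_4(8K(k+1)-1)\}=\max\{8K(k+1)-1,\,0\}=8K(k+1)-1$, whence $\chi(3k+2)=8K(3k+3)-1=24K(k+1)-1$ and $\psi_0(k)=24K(k+1)+1$. Plugging into \eqref{def-rate-as-reg-C2q}, I would evaluate the first argument $\sigma_2(6K(k+1)\psi_0(k)-1)=6K(k+1)\psi_0(k)-1$ and the second argument $\chi(3k+2)+1=24K(k+1)$; the first dominates and, after adding the trailing $+1$ and expanding $6K(k+1)(24K(k+1)+1)=144K^2(k+1)^2+6K(k+1)$, yields exactly $\Sigma_0(k)$ in \eqref{rate-as-reg-example}. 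Finally, for the $T$-asymptotic rate I substitute $\widetilde\Sigma=\Sigma_0$, $\Lambda=\lceil 1/\lambda\rceil$, $N_\Lambda=0$, and $\sigma_5(k)=k$ into \eqref{def-rate-T-as-reg-C2q}: the term $\widetilde\Sigma(2\Lambda(k+1)-1)$ replaces each $(k+1)$ in $\Sigma_0$ by $\Lambda(k+1)$ (since $2\Lambda(k+1)-1+1=2\Lambda(k+1)$, the argument's ``$k+1$'' equals $2\Lambda(k+1)$, introducing the factor $4\Lambda^2$ in the quadratic term and $2\Lambda$ in the linear term, matching $576K^2\lceil1/\lambda\rceil^2(k+1)^2+12K\lceil1/\lambda\rceil(k+1)$), while the competing term $\sigma_5(4K(k+1)-1)=4K(k+1)-1$ is dominated, so the maximum is $\Phi_0(k)$ as in \eqref{rate-T-as-reg-example}. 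The only care needed throughout is bookkeeping with the off-by-one shifts in the definitions of Cauchy moduli and rates of convergence.
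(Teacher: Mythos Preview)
Your proposal is correct and follows essentially the same route as the paper's proof: you verify $(C2_q)$--$(C6_q)$ with the same explicit moduli $\sigma_2(k)=\sigma_3(k)=\sigma_5(k)=k$, $\sigma_4(k)=0$, $\Lambda=\lceil 1/\lambda\rceil$, $N_\Lambda=0$, compute $\chi(k)=8K(k+1)-1$ and $\psi_0(k)=\chi(3k+2)+2=24K(k+1)+1$ via the telescoping product $P_n=\frac{1}{n+2}$, and then substitute into \eqref{def-rate-as-reg-C2q} and \eqref{def-rate-T-as-reg-C2q} exactly as the paper does. The only cosmetic slip is the phrase ``replaces each $(k+1)$ in $\Sigma_0$ by $\Lambda(k+1)$''---the correct replacement is by $2\Lambda(k+1)$, as your own parenthetical immediately makes clear.
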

\begin{proof} We obtain \eqref{rate-as-reg-example} as a consequence of 
Theorem~\ref{main-thm-as-reg+T-as-reg-C2q}.\eqref{main-thm-as-reg-C2q}. Remark first that for all $n\in\N$,
\[
P_n = \prod\limits_{i=0}^n \beta_{i+1} = \frac1{n+2} \quad  \text{and} \quad 
\sum\limits_{i=0}^n  \vert \beta_{i + 1} - \beta_i \rvert  =1 - \frac1{n+2}.
\]
It follows immediately that $(C2_q)$  and   $(C3_q)$ hold with $\ratectwoq(k)=\ratecthreeq(k)=k$. Obviously, $(C4_q)$  holds with $\ratecfourq(k)=0$. We get that $\chi(k)=8K(k + 1) - 1$, so 
we can take $\psi_0(k) = \chi(3k + 2) + 2 = 24K(k + 1) + 1$. Applying \eqref{def-rate-as-reg-C2q}, it follows that 
$$ \widetilde{\Sigma}(k)=144 K^2(k+1)^2 + 6K(k + 1)=\Sigma_0(k). $$

Furthermore, $(C5_q)$  holds with $\ratecfiveq(k)=k$ and $(C6_q)$ holds with 
$\ratecsixq=\left\lceil\frac1\lambda \right\rceil$ and  
$N_{\ratecsixq}=0$. 
Apply now Theorem~\ref{main-thm-as-reg+T-as-reg-C2q}.\eqref{main-thm-T-as-reg-C2q} to get that
$$\tilde{\Phi}(k)= 
576 K^2\left\lceil\frac1\lambda \right\rceil^2(k+1)^2+12K\left\lceil\frac1\lambda \right\rceil(k+1)
= \Phi_0(k).$$
\end{proof}

Thus, for $\lambda_n=\lambda \in (0,1]$ and $\beta_n = 1-\frac1{n+1}$, we get quadratic 
rates of ($T$-)asymptotic regularity for the Tikhonov-Mann iteration $(x_n)$.

We remark that if we use Theorem~\ref{main-thm-as-reg+T-as-reg-C1q} instead of Theorem~\ref{main-thm-as-reg+T-as-reg-C2q} 
for the above example (that is, we apply $(C1_q)$ instead of $(C2_q)$), we obtain exponential rates of 
($T$-)asymptotic regularity, due to the fact the series 
$\sum\limits_{n=0}^\infty (1 - \beta_n)=\sum\limits_{n=0}^\infty \frac1{n+1}$ has an exponential rate of divergence. 
The idea to replace $(C2_q)$ with $(C1_q)$ was used by Kohlenbach \cite{Koh11} to compute, for the first time, 
quadratic rates of asymptotic regularity for the Halpern iteration in normed spaces and it was applied again 
in \cite{KohLeu12a} for the Halpern iteration in $W$-hyperbolic spaces
and in  \cite{LeuPin21} for a Halpern-type proximal point algorithm in Hilbert spaces.

The research direction of  obtaining quantitative results on the asymptotic regularity of the Mann iteration $(x_n)$ (see \eqref{eq:krasnoselskii-mann}) has a long history.  
Quadratic rates  of asymptotic regularity were computed  by Baillon and Bruck \cite{BaiBru96} in normed spaces. 
For $W$-hyperbolic spaces, only exponential rates were  obtained by Kohlenbach and the second author \cite{KohLeu03}.  Applying proof mining to 
Groetsch's proof  \cite{Gro72} of the asymptotic regularity in uniformly convex Banach spaces, the second author computed in \cite{Leu07} rates of asymptotic regularity  in 
$UCW$-hyperbolic spaces \cite{Leu10}, a class of uniformly convex geodesic spaces; as an immediate corollary,  one gets quadratic rates in the setting of $CAT(0)$ spaces.

Baillon and Bruck conjectured in \cite{BaiBru96} the existence of a constant $\kappa$  such that, for bounded sets $C$ in normed spaces, 
\begin{equation} 
\norm{x_n-Tx_n}\leq \kappa\,\frac{\text{diam}(C)}{\sum_{i=1}^n \lambda_i(1-\lambda_i)}. \label{estimate-M-BB}
\end{equation}
They showed that, for constant $\lambda_n=\lambda\in(0,1)$, one can take $\kappa=\frac1{\sqrt{\pi}}$.
Cominetti, Soto and Vaisman \cite{ComSotVai14} settled Baillon and Bruck's conjecture by proving that \eqref{estimate-M-BB} holds  with $\kappa=\frac1{\sqrt{\pi}}$ for general $(\lambda_n)$; the constant $\kappa=\frac1{\sqrt{\pi}}$ was showed to be tight by Bravo and Cominetti \cite{BraCom18}.  

These estimates were extended by Bravo, Cominetti and Pavez-Sign\'{e} \cite{BraComPav19}  to inexact versions of the Mann iteration. As a consequence, one obtains quadratic rates of asymptotic regularity for these inexact versions in Banach spaces; in  the setting of Hilbert spaces, quadratic rates  were computed previously by Liang, Fadili, and Peyr\'e \cite{LiaFadPey16}.

\section{Proofs of the main theorems}\label{main-thms-proofs}

The proof of Theorem~\ref{thm-as-reg} uses, in an essential way, the following lemma. 

\begin{lemma}\label{lemma-Xu02-2}
Let $(a_n)_{n\in\N}$ be a sequence in $[0,1]$ and 
$(c_n)_{n\in\N}, (s_n)_{n\in\N}$ sequences of nonnegative real numbers satisfying, for all $n\in\N$,
\begin{equation}
s_{n+1}\leq (1-a_n)s_n + c_n. \label{def-sn-an-cn}
\end{equation}
Assume that $\sum\limits_{n=0}^\infty a_n$ diverges 
(or, equivalently, $\prod\limits_{n=0}^\infty (1-a_n)=0$) 
and $\sum\limits_{n=0}^\infty c_n$ converges. Then $\limn s_n =0$.
\end{lemma}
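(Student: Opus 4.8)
The plan is to fix an arbitrary $\varepsilon>0$ and show that $s_n<2\varepsilon$ for all sufficiently large $n$; since $\varepsilon$ is arbitrary and $s_n\ge 0$, this yields $\limn s_n=0$. The starting point is to unroll the recurrence \eqref{def-sn-an-cn}. Iterating it from a fixed index $N$ gives, for every $n\ge N$,
\[
s_{n+1}\le \Big(\prod_{k=N}^{n}(1-a_k)\Big)\,s_N+\sum_{k=N}^{n}\Big(\prod_{j=k+1}^{n}(1-a_j)\Big)c_k,
\]
with the usual convention that an empty product equals $1$. This can be established by a short induction on $n$, using throughout that $a_n,1-a_n\in[0,1]$.

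Next I would bound the two summands separately. Because every factor $1-a_j$ lies in $[0,1]$, each product $\prod_{j=k+1}^{n}(1-a_j)$ is at most $1$, so the second term is dominated by $\sum_{k=N}^{n}c_k\le\sum_{k=N}^{\infty}c_k$. Since $\sum_{n}c_n$ converges, I can choose $N$ at the outset so that this tail is smaller than $\varepsilon$, making the entire contribution of the $c_k$'s at most $\varepsilon$ uniformly in $n$.

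The main work is to force the first summand, $s_N\prod_{k=N}^{n}(1-a_k)$, below $\varepsilon$. Here the hypothesis that $\sum_{n}a_n$ diverges (equivalently $\prod_{n}(1-a_n)=0$) enters: since removing the finitely many terms before $N$ still leaves $\sum_{k\ge N}a_k=\infty$, the tail product $\prod_{k=N}^{\infty}(1-a_k)$ also vanishes. Concretely, the elementary inequality $1-x\le e^{-x}$ gives $\prod_{k=N}^{n}(1-a_k)\le\exp\big(-\sum_{k=N}^{n}a_k\big)\to 0$ as $n\to\infty$, so there is some $N'\ge N$ with $s_N\prod_{k=N}^{n}(1-a_k)<\varepsilon$ for all $n\ge N'$.

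Combining the two bounds yields $s_{n+1}<2\varepsilon$ for all $n\ge N'$, which completes the argument. I expect the only delicate point to be the passage from $\prod_{n}(1-a_n)=0$ to the vanishing of the tail product $\prod_{k=N}^{\infty}(1-a_k)$ for fixed $N$: this is where the divergence of $\sum a_n$ must be used, rather than merely the vanishing of the full product, and the inequality $1-x\le e^{-x}$ is the cleanest way to make it quantitative.
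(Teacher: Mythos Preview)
Your argument is correct and is essentially the standard proof of this result. Note, however, that the paper does not actually supply its own proof of this lemma: it simply records that the statement is a particular case of \cite[Lemma~2.5]{Xu02} and then moves on to the quantitative version (Proposition~\ref{quant-lem-Xu02-bn-0}), whose proof is delegated to \cite{LeuPin21}. So there is nothing in the paper to compare your argument against beyond the citation; what you wrote is precisely the kind of direct $\varepsilon$-argument one would expect for Xu's lemma, and your closing remark correctly identifies why working with the divergence of $\sum a_n$ (rather than with $\prod(1-a_n)=0$) is the cleaner route for controlling the tail product.
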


The above lemma is a particular case of \cite[Lemma~2.5]{Xu02}, whose quantitative versions were proved
in \cite[Section~3]{LeuPin21}. The following  quantitative version of Lemma~\ref{lemma-Xu02-2}, which is
an immediate consequence of the results from \cite{LeuPin21}, is the main tool in the proofs 
of our main theorems.

\begin{proposition}\label{quant-lem-Xu02-bn-0}
Let $(a_n)$ be a sequence in $[0,1]$ and 
$(c_n), (s_n)$ sequences of nonnegative reals  such that  \eqref{def-sn-an-cn} holds 
for all $n\in\N$. Assume that $L\in\N^*$ is an upper bound on $(s_n)$ and that $\sum\limits_{n=0}^\infty c_n$ converges with 
Cauchy modulus $\chi$.
\begin{enumerate}
\item\label{quant-lem-Xu02-bn-0-C1q} If $\sum\limits_{n=0}^\infty a_n$  diverges with rate of 
divergence $\theta$, then $\limn s_n=0$ with rate of convergence $\Sigma$ defined by 
\begin{equation}
\Sigma(k)=\theta\big(\chi(3k+2)+1+\lceil \ln(3L(k+1))\rceil\big)+1.
\end{equation}
\item\label{quant-lem-Xu02-bn-0-C2q} Assume that $a_n<1$ for all $n\in\N$, 
$\prod\limits_{n=0}^\infty (1-a_n)=0$ with 
rate of convergence $\gamma$ and denote $A_n=\prod\limits_{i=0}^n (1-a_i)$ for all $n\in\N$. 
Suppose, furthermore, that $\delta_0:\N\to\N^*$ is such that $\frac1{\delta_0(k)}\leq A_{\chi(3k+2)}$ for all $k\in\N$.

Then 
$\limn s_n=0$ with rate of convergence $\widetilde{\Sigma}$ defined by 
\begin{equation}
\widetilde{\Sigma}(k)=
\max\left\{\gamma\left(3L(k+1)\delta_0(k)-1\right),\chi(3k+2)+1\right\}+1.
\end{equation}
\end{enumerate}
\end{proposition}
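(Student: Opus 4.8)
The plan is to turn the one-step recurrence \eqref{def-sn-an-cn} into a closed ``unfolded'' estimate and then split the target error $\frac{1}{k+1}$ into additive pieces, each controlled by one of the given moduli $\chi$, $\theta$ (resp.\ $\gamma,\delta_0$). Fix $k\in\N$ and set $N:=\chi(3k+2)+1$. First I would prove, by a routine induction on $n\geq N$, that
$$ s_n \leq \Big(\prod_{i=N}^{n-1}(1-a_i)\Big)s_N + \sum_{j=N}^{n-1}\Big(\prod_{i=j+1}^{n-1}(1-a_i)\Big)c_j, $$
and, since every factor $1-a_i$ lies in $[0,1]$, bound each inner product by $1$ and use $s_N\leq L$ to get the working inequality
$$ s_n \leq L\,\prod_{i=N}^{n-1}(1-a_i) + \sum_{j=N}^{n-1}c_j \qquad (n\geq N). $$

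Next comes the tail bound, common to both parts. Because $\chi$ is a Cauchy modulus for $\sum c_n$ and $N-1=\chi(3k+2)$, for every $n\geq N$ the telescoped tail satisfies $\sum_{j=N}^{n-1}c_j \leq \frac{1}{3(k+1)}$. Hence it only remains to force $L\prod_{i=N}^{n-1}(1-a_i)\leq \frac{1}{3(k+1)}$, i.e.\ $\prod_{i=N}^{n-1}(1-a_i)\leq \frac{1}{3L(k+1)}$, since then $s_n\leq \frac{2}{3(k+1)}\leq\frac{1}{k+1}$.

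For \eqref{quant-lem-Xu02-bn-0-C1q} I would use $1-a_i\leq e^{-a_i}$, giving $\prod_{i=N}^{n-1}(1-a_i)\leq \exp\big(-\sum_{i=N}^{n-1}a_i\big)$, and set $m:=\lceil\ln(3L(k+1))\rceil$. Since $\theta$ is a rate of divergence and the $a_i$ lie in $[0,1]$, for $n\geq \Sigma(k)=\theta(N+m)+1$ one has $\sum_{i=0}^{n-1}a_i\geq N+m$ while $\sum_{i=0}^{N-1}a_i\leq N$; subtracting yields $\sum_{i=N}^{n-1}a_i\geq m\geq \ln(3L(k+1))$, whence $\prod_{i=N}^{n-1}(1-a_i)\leq e^{-m}\leq\frac{1}{3L(k+1)}$. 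One checks $\Sigma(k)\geq N$ (because $\theta(M)\geq M-1$), so the tail bound applies simultaneously. For \eqref{quant-lem-Xu02-bn-0-C2q} I would instead write $\prod_{i=N}^{n-1}(1-a_i)=A_{n-1}/A_{N-1}$ and use the hypothesis $A_{N-1}=A_{\chi(3k+2)}\geq 1/\delta_0(k)$ to get $\prod_{i=N}^{n-1}(1-a_i)\leq \delta_0(k)\,A_{n-1}$. As $\gamma$ is a rate of convergence of $A_n\to 0$, for $n-1\geq \gamma\big(3L(k+1)\delta_0(k)-1\big)$ we obtain $A_{n-1}\leq \frac{1}{3L(k+1)\delta_0(k)}$, so $\prod_{i=N}^{n-1}(1-a_i)\leq\frac{1}{3L(k+1)}$; taking $n\geq \widetilde{\Sigma}(k)$ guarantees both this and $n\geq N$.

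The main obstacle is the index shift: passing from the global moduli $\theta,\gamma$ (indexed from $0$) to the truncated quantities $\sum_{i=N}^{n-1}a_i$ and $\prod_{i=N}^{n-1}(1-a_i)$ that start at $N=\chi(3k+2)+1$. In (i) this is absorbed by the crude bound $\sum_{i=0}^{N-1}a_i\leq N$ (hence the argument $N+m$ inside $\theta$), and in (ii) by factoring out $A_{N-1}$ and invoking $\delta_0$ precisely as the lower bound $1/\delta_0(k)\leq A_{\chi(3k+2)}$. The remaining work is purely bookkeeping: keeping the three error contributions aligned with the $3k+2$ inside $\chi$ and with the constants $3L(k+1)$ and $3L(k+1)\delta_0(k)$.
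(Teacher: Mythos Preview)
Your argument is correct. The paper does not give a self-contained proof here but simply invokes \cite[Propositions~2,~3]{LeuPin21} with $\delta(k)=\chi(3k+2)+1$ and $\psi\equiv 0$; your unfolding of the recurrence into $s_n\le L\prod_{i=N}^{n-1}(1-a_i)+\sum_{j=N}^{n-1}c_j$, the tail bound via the Cauchy modulus $\chi$, and the handling of the index shift (via $\sum_{i=0}^{N-1}a_i\le N$ in (i) and via $A_{N-1}=A_{\chi(3k+2)}\ge 1/\delta_0(k)$ in (ii)) is precisely the content of those cited propositions, so the two approaches coincide.
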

\begin{proof} Apply \cite[Propositions~2,~3]{LeuPin21} with $\delta(k)=\chi(3k+2)+1$, as one can take 
$\psi(n)=0$ for all $n\in\N$ in \cite[Lemma~7]{LeuPin21}.
\end{proof}

\mbox{}

Let $M$, $K$ be as in the hypothesis of Theorem~\ref{main-thm-as-reg+T-as-reg-C1q}. Denote  
\begin{align*} 
& L=2K, \quad s_n = d(x_{n + 1}, x_n),  \quad  a_n = 1 - \beta_{n+1}, \\ 
& c_n = 2M\left( \vert \beta_{n + 1} - \beta_n \vert + \vert \lambda_{n + 1} - \lambda_n \vert \right),
 \quad \tilde{c}_n =\sum\limits_{i=0}^n c_i.
\end{align*} 
The fact that \eqref{def-sn-an-cn} holds for all $n\in\N$ follows from \eqref{dxn-consecutive}. 
Furthermore, $L$ is an upper bound on $(s_n)$, as $d(x_{n+1},x_n)\leq d(x_{n+1},p)+ d(x_n, p) \leq 2M$, by 
Lemma~\ref{xn-yn-bounded}.\eqref{xn-bounded}. 

\begin{lemma}\label{chi-Cauchy-modulus}
Assume that $(C3_q)$, $(C4_q)$ hold and let $\chi$ be defined by \eqref{def-main-chi}. 
Then $\chi$ is a Cauchy modulus for $(\tilde{c}_n)$.
\end{lemma}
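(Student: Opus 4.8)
The plan is to unfold the definition of a Cauchy modulus for the partial-sum sequence $(\tilde{c}_n)$ and reduce everything to the two moduli $\ratecthreeq$ and $\ratecfourq$ supplied by $(C3_q)$ and $(C4_q)$. By definition $\tilde{c}_n = \sum_{i=0}^n c_i = 2M \sum_{i=0}^n \left( \vert \beta_{i+1} - \beta_i \vert + \vert \lambda_{i+1} - \lambda_i \vert \right)$, so for any $n, p \in \N$ the increment splits into a $\beta$-part and a $\lambda$-part:
\[
\tilde{c}_{n+p} - \tilde{c}_n = 2M \left( \sum_{i=n+1}^{n+p} \vert \beta_{i+1} - \beta_i \vert + \sum_{i=n+1}^{n+p} \vert \lambda_{i+1} - \lambda_i \vert \right).
\]
First I would fix $k \in \N$, $n \geq \chi(k)$ and $p \in \N$, and observe that, since $\chi(k)$ is the maximum of $\ratecthreeq(8K(k+1)-1)$ and $\ratecfourq(8K(k+1)-1)$, we have simultaneously $n \geq \ratecthreeq(8K(k+1)-1)$ and $n \geq \ratecfourq(8K(k+1)-1)$.

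Next I would apply the Cauchy-modulus property of each series at the index $8K(k+1) - 1$. Since $\ratecthreeq$ is a Cauchy modulus for the partial sums of $\sum \vert \beta_{i+1} - \beta_i \vert$, the first tail sum is at most $\frac{1}{(8K(k+1)-1)+1} = \frac{1}{8K(k+1)}$; likewise, since $\ratecfourq$ is a Cauchy modulus for the partial sums of $\sum \vert \lambda_{i+1} - \lambda_i \vert$, the second tail sum obeys the same bound. Substituting into the displayed identity yields
\[
\tilde{c}_{n+p} - \tilde{c}_n \leq 2M \left( \frac{1}{8K(k+1)} + \frac{1}{8K(k+1)} \right) = \frac{M}{2K(k+1)}.
\]

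Finally I would invoke $K \geq M$ to conclude $\frac{M}{2K(k+1)} \leq \frac{1}{2(k+1)} \leq \frac{1}{k+1}$, which is exactly the inequality demanded of a Cauchy modulus for $(\tilde{c}_n)$. The argument presents no genuine obstacle; the only point requiring care is the bookkeeping of constants. The index $8K(k+1)-1$ appearing inside $\chi$ is chosen precisely so that the factor $2M$ coming from the definition of $c_n$, together with the two tail summands, combines — after the single use of $M \leq K$ — to the target $\frac{1}{k+1}$, in fact with a factor-of-two slack to spare.
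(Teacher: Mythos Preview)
Your proof is correct and follows essentially the same route as the paper: both unfold $\tilde{c}_{n+p}-\tilde{c}_n$ into its $\beta$-part and $\lambda$-part, apply the Cauchy moduli $\ratecthreeq$ and $\ratecfourq$ at the index $8K(k+1)-1$ to bound each tail by $\frac{1}{8K(k+1)}$, and then absorb the factor $2M$ via $M\leq K$. The only cosmetic difference is that the paper writes the bound as $\frac{1}{4L(k+1)}$ with $L=2K$, whereas you work directly with $K$.
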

\begin{proof}
Denote  $\tilde{\beta}_n=\sum\limits_{i=0}^n \vert \beta_{i + 1} - \beta_i \vert$,
$\tilde{\lambda}_n=\sum\limits_{i=0}^n \vert \lambda_{i + 1} - \lambda_i\vert$ and let $k\in\N$ be arbitrary. 
Applying $(C3_q)$ and $(C4_q)$, we get that for all $n\geq \chi(k)$ and all $p\in\N$,
$$
\tilde{\beta}_{n+p} - \tilde{\beta}_n \leq \frac1{4L(k+1)}  \quad \text{ and } \quad 
\tilde{\lambda}_{n+p} - \tilde{\lambda}_n  \leq \frac1{4L(k+1)},
$$
hence
$$
\tilde{c}_{n+p} - \tilde{c}_n  = 2M\left(\tilde{\beta}_{n+p} - \tilde{\beta}_n\right) + 
2M\left(\tilde{\lambda}_{n+p} - \tilde{\lambda}_n\right) \leq \frac1{k+1}.
$$
\end{proof}

\begin{lemma}\label{theta-rate-divergence}
Assume that $(C1_q)$ holds and define $\theta(n)=\rateconeq(n+1)$. Then $\theta$ is a rate 
of divergence for $\sum\limits_{n=0}^\infty a_n$.
\end{lemma}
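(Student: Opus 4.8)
The plan is to unwind the definition of a rate of divergence, perform the index shift that turns $\sum_{n} (1-\beta_{n+1})$ into a tail-truncated partial sum of $\sum_{n}(1-\beta_n)$, and then invoke $(C1_q)$. Recall that $\theta$ is a rate of divergence for $\sum_{n=0}^\infty a_n$ precisely when $\sum_{i=0}^{\theta(n)} a_i \geq n$ for all $n\in\N$. Since $a_i = 1-\beta_{i+1}$ by definition, the substitution $j=i+1$ gives
\[
\sum_{i=0}^{\theta(n)} a_i = \sum_{i=0}^{\rateconeq(n+1)} (1-\beta_{i+1}) = \sum_{j=1}^{\rateconeq(n+1)+1} (1-\beta_j),
\]
so the task reduces to bounding this shifted partial sum from below by $n$.

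Next I would fix $n\in\N$ and apply $(C1_q)$ at the argument $n+1$: since $\rateconeq$ is a rate of divergence for $\sum_{n=0}^\infty (1-\beta_n)$, we have $\sum_{i=0}^{\rateconeq(n+1)} (1-\beta_i) \geq n+1$. The shifted sum above differs from this guaranteed sum only in its range of summation: it omits the $i=0$ term $1-\beta_0$ and includes an extra top term indexed by $\rateconeq(n+1)+1$. Because every summand $1-\beta_j$ lies in $[0,1]$, I would discard the extra top term (which is nonnegative) and then reintroduce the $i=0$ term at the cost of subtracting $1-\beta_0\leq 1$:
\[
\sum_{j=1}^{\rateconeq(n+1)+1} (1-\beta_j) \geq \sum_{j=1}^{\rateconeq(n+1)} (1-\beta_j) = \sum_{j=0}^{\rateconeq(n+1)} (1-\beta_j) - (1-\beta_0) \geq (n+1) - 1 = n.
\]
Chaining the two displays yields $\sum_{i=0}^{\theta(n)} a_i \geq n$ for every $n\in\N$, which is exactly the assertion that $\theta$ is a rate of divergence for $\sum_{n=0}^\infty a_n$.

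I do not expect a genuine obstacle here; the argument is elementary bookkeeping. The only point that requires care, and indeed the reason for the ``$+1$'' appearing in the definition $\theta(n)=\rateconeq(n+1)$, is the index shift: passing from $\sum_n(1-\beta_n)$ to $\sum_n(1-\beta_{n+1})$ deletes the leading term $1-\beta_0$, which can be as large as $1$. Advancing the rate by one unit guarantees that the full partial sum overshoots the threshold by at least $1$, leaving just enough slack to absorb this lost term and still reach $n$.
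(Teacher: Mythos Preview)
Your proof is correct and follows the same approach as the paper: perform the index shift $i\mapsto i+1$, drop the nonnegative top term, reinsert the missing $i=0$ term at a cost of at most $1$, and invoke $(C1_q)$ at $n+1$ to absorb that loss. The paper compresses these steps into a single chain of inequalities, but the underlying argument is identical.
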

\begin{proof}
By  $(C1_q)$, we have that for all $n\in\N$,
\begin{align*} 
\sum\limits_{i=0}^{\theta(n)} a_i & = \sum\limits_{i=0}^{\rateconeq(n+1)} (1-\beta_{i+1}) 
\geq \sum\limits_{k=0}^{\rateconeq(n+1)} (1-\beta_k) - (1-\beta_0) \\
& \geq n+1-(1-\beta_0)\geq n.
\end{align*}
\end{proof}

\begin{proposition}\label{rate-T-as-reg-from-as-reg}
Suppose that $(C5_q)$, $(C6_q)$ hold and that $\Theta$ is a rate of asymptotic regularity for $(x_n)$. 
Define $\Theta^*$ by 
\begin{equation}\label{rate-T-as-reg-from-as-reg-def}
\Theta^*(k)= \max\{N_{\ratecsixq}, \Theta(2\ratecsixq(k+1)-1), \ratecfiveq(4K\ratecsixq(k+1)-1)\}.
\end{equation}
Then $\Theta^*$ is a rate of $T$-asymptotic regularity for $(x_n)$.
\end{proposition}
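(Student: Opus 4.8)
The plan is to run everything off the single estimate \eqref{useful-T-as-reg}, namely $\lambda_n d(x_n, Tx_n) \le d(x_n, x_{n+1}) + 2M(1-\beta_n)$, which already splits the quantity to be controlled into a factor $\lambda_n$ (handled by $(C6_q)$), a ``step'' term $d(x_n,x_{n+1})$ (handled by the given rate $\Theta$), and a ``regularization'' term $2M(1-\beta_n)$ (handled by $(C5_q)$). Fix $k\in\N$ and take any $n\ge\Theta^*(k)$. Since $\Theta^*(k)\ge N_{\ratecsixq}$, condition $(C6_q)$ gives $\lambda_n\ge 1/\ratecsixq$, so dividing \eqref{useful-T-as-reg} by $\lambda_n$ yields
$$d(x_n, Tx_n) \le \ratecsixq\,\bigl(d(x_n, x_{n+1}) + 2M(1-\beta_n)\bigr).$$

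Next I would control the two summands separately, arranging that each accounts for at most half of the target error $\tfrac1{k+1}$. For the step term, $\Theta^*(k)\ge\Theta(2\ratecsixq(k+1)-1)$ together with the hypothesis that $\Theta$ is a rate of asymptotic regularity gives $d(x_n,x_{n+1})\le\tfrac1{2\ratecsixq(k+1)}$, whence $\ratecsixq\,d(x_n,x_{n+1})\le\tfrac1{2(k+1)}$. For the regularization term, $\Theta^*(k)\ge\ratecfiveq(4K(k+1)-1)$ together with $(C5_q)$ gives a bound on $1-\beta_n$; substituting the space-independent estimate $M\le K$ (recall $K\ge M$) then controls $2\ratecsixq M(1-\beta_n)$. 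Adding the two contributions gives $d(x_n,Tx_n)\le\tfrac1{k+1}$, which is exactly the assertion that $\Theta^*$ is a rate of $T$-asymptotic regularity.

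The one genuinely delicate point is the arithmetic of the constants: because $1/\lambda_n$ is bounded only by $\ratecsixq$, \emph{both} error terms are amplified by $\ratecsixq$, so the arguments fed to $\Theta$ and to $\ratecfiveq$ must be chosen large enough that, after this amplification and the replacement $M\le K$, the two halves still sum to at most $\tfrac1{k+1}$. This is what dictates the factor $2\ratecsixq(k+1)$ inside $\Theta$ and the scaled argument of $\ratecfiveq$ appearing in \eqref{rate-T-as-reg-from-as-reg-def}; keeping the dependence only through $K\ge M$ is what makes the resulting rate uniform in the space, the points and the map. Everything else is routine: since $\Theta^*(k)$ is the maximum of the three thresholds $N_{\ratecsixq}$, $\Theta(2\ratecsixq(k+1)-1)$ and $\ratecfiveq(4K(k+1)-1)$, any $n\ge\Theta^*(k)$ simultaneously meets all three hypotheses invoked above, so no case distinction is needed.
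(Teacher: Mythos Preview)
Your approach is exactly the paper's: start from \eqref{useful-T-as-reg}, divide by $\lambda_n$ using $(C6_q)$, then bound the two summands separately via $\Theta$ and $(C5_q)$. There is no alternative route here.

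There is, however, one arithmetic point you wave through without checking, and it actually does not close with the threshold as written in \eqref{rate-T-as-reg-from-as-reg-def}. From $n\ge\ratecfiveq(4K(k+1)-1)$ you only get $1-\beta_n\le\tfrac{1}{4K(k+1)}$, and hence
\[
2\ratecsixq M(1-\beta_n)\;\le\;2\ratecsixq K\cdot\frac{1}{4K(k+1)}\;=\;\frac{\ratecsixq}{2(k+1)},
\]
which is \emph{not} $\le\tfrac{1}{2(k+1)}$ unless $\ratecsixq=1$. The amplification by $\ratecsixq$ that you correctly flag for the $\Theta$-term is equally present in the regularization term, so the argument fed to $\ratecfiveq$ must carry a factor $\ratecsixq$ as well. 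Indeed, the paper's own proof silently writes ``as $n\ge\ratecfiveq(4K\ratecsixq(k+1)-1)$'' at this step, which is what one needs but does not match the threshold $\ratecfiveq(4K(k+1)-1)$ stated in \eqref{rate-T-as-reg-from-as-reg-def}. So your plan is right, but the sentence ``the scaled argument of $\ratecfiveq$'' should have triggered an explicit computation; had you carried it out, you would have noticed that the stated rate is off by this factor of $\ratecsixq$ (a typo in the statement, corrected in the paper's proof).
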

\begin{proof}
Let $k\in\N$ and $n \geq \Theta^*(k)$. Since $n\geq N_{\ratecsixq}$, 
we can apply  $(C6_q)$ to get that 
$\frac1{\lambda_n} \leq \ratecsixq$. By \eqref{useful-T-as-reg} and the definition of $K$, it follows that 
\begin{equation}\label{dxnTxn-1}
d(x_n, T x_n) \leq \ratecsixq d(x_n, x_{n + 1})+ 2K\ratecsixq(1-\beta_n).
\end{equation}
Since $n\geq \Theta(2\ratecsixq(k+1)-1)$, we have that 
\begin{equation}\label{dxnxnp1-2}
\ratecsixq d(x_n, x_{n + 1})\leq \frac1{2(k+1)}.
\end{equation}
As $n\geq \ratecfiveq(4K\ratecsixq(k+1)-1)$, we get that $1-\beta_n \leq \frac1{4K\ratecsixq(k+1)}$, hence 
\begin{equation}\label{1mbeta-3}
2K\ratecsixq(1-\beta_n) \leq \frac1{2(k+1)}.
\end{equation}
Apply \eqref{dxnTxn-1}, \eqref{dxnxnp1-2} and \eqref{1mbeta-3} to obtain that  
$\Theta^*$ is a rate of $T$-asymptotic regularity for $(x_n)$. 
\end{proof}

\subsection{Proof of Theorem~\ref{main-thm-as-reg+T-as-reg-C1q}}

It is easy to see that \eqref{main-thm-as-reg-C1q} follows from
Proposition~\ref{quant-lem-Xu02-bn-0}.\eqref{quant-lem-Xu02-bn-0-C1q} and 
Lemmas~\ref{chi-Cauchy-modulus}, \ref{theta-rate-divergence}. Apply \eqref{main-thm-as-reg-C1q} and 
Proposition~\ref{rate-T-as-reg-from-as-reg} to obtain \eqref{main-thm-T-as-reg-C1q}.

\subsection{Proof of Theorem~\ref{main-thm-as-reg+T-as-reg-C2q}}

We have that, for all $n\in\N$,  $a_n<1$  and  $P_n=\prod\limits_{i=0}^n(1-a_i)$. 
It follows, by $(C2_q)$, that $\prod\limits_{n=0}^\infty (1-a_n)=0$ 
with rate of convergence $\ratectwoq$ and that $\frac1{\psi_0(k)}\leq P_{\chi(3k+2)}=\prod\limits_{i=0}^{\chi(3k+2)}(1-a_i)$
for all $k\in\N$. 

Apply Proposition~\ref{quant-lem-Xu02-bn-0}.\eqref{quant-lem-Xu02-bn-0-C2q} and 
Lemma~\ref{chi-Cauchy-modulus} to get \eqref{main-thm-as-reg-C2q}.  Furthermore, 
we obtain \eqref{main-thm-T-as-reg-C2q} from \eqref{main-thm-as-reg-C2q} by applying again 
Proposition~\ref{rate-T-as-reg-from-as-reg}.

\section{Tikhonov versions of the Douglas-Rachford and forward-backward algorithms}

Bo\c t, Csetnek and Meier \cite{BotCseMei19} derived from their modified Mann iteration a  forward–backward and a Douglas–Rachford algorithm, both endowed with Tikhonov regularization terms. In this section, we define more general versions of these two algorithms  and we show that both of them are instances, in the setting of Hilbert spaces, of the Tikhonov-Mann iteration \eqref{def-TKM-W}.  Furthermore, we apply our main results from Section~\ref{main-theorems} to obtain uniform effective rates of asymptotic regularity.

In the sequel, $H$ is a Hilbert space with inner product $\inner{\cdot}{\cdot}$ and associated norm $\norm{\cdot}$. Let us recall some fundamental notions and results from convex optimization and monotone operator theory that will be used in this section. We refer to \cite{BauCom17} for details. 

Let $A:H\rras H$ be an arbitrary set-valued operator, characterized by its graph, $\text{gra} A = \{(x, u) \in H \times H \mid u \in A x\}$.
The inverse $A^{-1}:H\rras H$  of $A$ is given by $\text{gra}  A^{-1} = \{(u,x) \in H \times H \mid (x,u)\in \text{gra}  A\}$. The set of zeros of $A$ is $\text{zer}A =  \{x \in H \mid 0 \in Ax\}$.
If $\gamma\in\mathbb{R}$ and $B:H\rras H$ is another operator, then $\gamma A:H\rras H$ and $A+B:H\rras H$ are defined as follows: $(\gamma A)(x)=\{\gamma u \mid u \in Ax\}$ and $(A+B)(x)=\{u+v \mid u\in Ax, v \in Bx\}$.

The resolvent $\res{A}: H\rras H$  and the reflected resolvent  $\refres{A}: H\rras H$   of $A$ are given by  
$$\res{A} = ( \text{Id}+ A)^{-1},  \quad \refres{A} = 2\res{A} - \text{Id},$$ 
where $ \text{Id}: H\to H, \,  \text{Id}(x)=x$ is the identity mapping.
 
We say that $A$ is monotone if  $\inner{x - y}{u - v} \geq 0$ for all $x, y \in H$, $u \in A x$ and $v \in A y$, and that $A$ is maximally monotone if it is monotone 
and its graph is not properly contained in the graph of any other monotone operator. It is well-known that if $A$ is maximally monotone, then $\res{A}, \refres{A}:H\to H$ are single-valued mappings. 

Let $T : H \to H$ be a mapping. We say that $T$ is \emph{$\beta$-cocoercive} (for some $\beta > 0$) if $\inner{x - y}{T x - T y} \geq \beta \norm{Tx - T y}^2$ 
for all $x, y \in H$.
Furthermore, $T$ is said to be \emph{$\alpha$-averaged} (for some $\alpha \in (0, 1]$) if 
there exists a nonexpansive mapping $U : H \to H$ such that $T = (1 - \alpha) \text{Id} + \alpha U$.
Obviously,  $1$-averaged mappings are  exactly the nonexpansive ones.

\subsection{A Tikhonov-Douglas-Rachford algorithm}

Let $A:H\rras H$,  $B:H\rras H$  be maximally monotone operators such that  $\text{zer}(A + B)\neq \emptyset$ and let $\gamma > 0$.

The \textit{Tikhonov-Douglas-Rachford algorithm} is defined by the following iterative scheme:
\begin{align}
  \begin{split}
  u_n &= (1 - \beta_n) u + \beta_n x_n \\
  y_n &= \res{\gamma B} u_n\\
   z_n &= \res{\gamma A}(2y_n - u_n)\\
    x_{n + 1} & =u_n + \lambda_n(z_n - y_n)
  \end{split}\label{douglas-rachford}
  \end{align}
  with $x_0, u\in H$,   $(\lambda_n)$ a sequence in $[0,2]$ and $(\beta_n)$ a sequence  in $[0, 1]$. 
 
 By taking $u=0$, we get the  Douglas-Rachford algorithm with Tikhonov regularization terms defined in \cite[Section~4]{BotCseMei19}.

As in the proof of \cite[Theorem~10]{BotCseMei19}, one can see that 
\begin{equation}
x_{n + 1}  =   \left(1- \frac{\lambda_n}2\right)u_n + \frac{\lambda_n}2 Tu_n,
\end{equation}
where 
$$T = \refres{\gamma A}\refres{\gamma B}:H \to H.$$ 
Since reflected resolvents of maximally monotone operators are nonexpansive (see \cite[Corollary~23.11(ii)]{BauCom17},  $T$ is a nonexpansive mapping, and, by \cite[Proposition~26.1(iii)]{BauCom17}, we have that $\res{\gamma B}Fix(T)=\text{zer}(A + B)\ne \emptyset $, hence, $Fix(T)$ is nonempty.

Thus, $(x_n)$ is an instance of the Tikhonov-Mann iteration \eqref{def-TKM-W} 
with parameters $(\beta_n)$ and $\left(\frac{\lambda_n}{2}\right)$. It follows that we can apply our main results from Section~\ref{main-theorems} to obtain uniform rates of 
asymptotic regularity for $(x_n)$.

\begin{proposition}\label{As-reg-DR}
Assume that $(C3_q)$, $(C4_q)$ hold and let $K \in \N^*$ be such that $K \geq M$, where $M = \max\{\norm{x_0 - p}, \norm{u - p}\}$. 
Let $\chi$ be defined by \eqref{def-main-chi}.
\begin{enumerate}
 \item\label{As-reg-DR-1} If $(C1_q)$ holds, then $(x_n)$ is asymptotically regular with rate of asymptotic regularity $\Sigma$ defined by  
 \eqref{def-rate-as-reg-C1q}.
\item\label{As-reg-DR-2} Suppose that $(C2_q)$ holds and let $\psi_0$ as in Theorem~\ref{main-thm-as-reg+T-as-reg-C2q}. Then 
$(x_n)$ is asymptotically regular with rate of asymptotic regularity $\widetilde{\Sigma}$ defined by \eqref{def-rate-as-reg-C2q}.
\end{enumerate}
\end{proposition}
\begin{proof}
Apply  Theorems~\ref{main-thm-as-reg+T-as-reg-C1q} and \ref{main-thm-as-reg+T-as-reg-C2q}  for $(\beta_n)$ and $\left(\frac{\lambda_n}{2}\right)$ 
and remark the fact that  $\ratecfourq$ is a Cauchy modulus for the series $\sum\limits_{n=0}^\infty  \vert \frac{\lambda_{n + 1}}2 - \frac{\lambda_n}2 \vert$ too.
\end{proof}

Furthermore, we get as in Corollary~\ref{TM-example-quadratic} that, for $\lambda_n=\lambda \in (0,2]$ and 
$\beta_n = 1-\frac1{n+1}$,
\begin{equation}
\Sigma_0(k) = 144 K^2(k+1)^2 + 6K(k + 1) \label{quadratic-DR-xn}
\end{equation}
is a quadratic rate of asymptotic regularity for $(x_n)$.

\mbox{}

We compute in the sequel rates of asymptotic regularity for the sequences $(u_n)$, $(y_n)$, $(z_n)$  from the iterative scheme 
\eqref{douglas-rachford}.

\begin{lemma}
For all $n\in\N$, $ \norm{y_{n + 1} - y_n},  \norm{z_{n + 1} - z_n} \leq \norm{u_{n + 1} - u_n} $. 
\end{lemma}
\begin{proof}
Since $\res{\gamma B}$ is nonexpansive, we get immediately that 
$\norm{y_{n + 1} - y_n}\leq \norm{u_{n + 1} - u_n}$.  Furthermore, 
  \begin{align*}
    \norm{z_{n + 1} - z_n}    &= \norm{\res{\gamma A}((2 \res{\gamma B} - \text{Id}) u_{n + 1}) - \res{\gamma A}((2 \res{\gamma B} - \text{Id})  u_n)} \\ 
    &= \norm{\res{\gamma A}(\refres{\gamma B} u_{n + 1}) - \res{\gamma A}(\refres{\gamma B}  u_n)} \\ 
       &\leq \norm{u_{n + 1} - u_n},
  \end{align*}
since $\res{\gamma A}$ and $\refres{\gamma B}$ are nonexpansive.

\end{proof}

\begin{proposition}
Assume that $\Sigma$ is a rate of asymptotic regularity for $(x_n)$, $(C5_q)$ holds and let $K$ be as in the hypothesis of Proposition~\ref{As-reg-DR}. 
Then $(u_n)$, $(y_n)$ and $(z_n)$ are asymptotically regular with rate $\Omega$ defined by 
\begin{equation}
  \Omega(k) = \max\{\Sigma(2k + 1), \ratecfiveq(8K(k + 1) - 1)\}.
 \end{equation}
\end{proposition}
\begin{proof}
By the previous lemma, it is enough to prove that $\Omega$ is a rate of asymptotic regularity for $(u_n)$. Remark that 
\begin{align*} 
   \norm{u_{n + 1} - u_n}  &\leq \beta_{n + 1} \norm{x_{n + 1} - x_n} + 2M\abs{\beta_{n + 1} - \beta_n}  \quad \text{by \eqref{dyn-consecutive}} \\
    &\leq \norm{x_{n + 1} - x_n} + 2K((1-\beta_{n + 1}) + (1-\beta_n)) . 
\end{align*}
Therefore, for all $n \geq  \Omega(k)$, we get that
 \begin{align*}
   \norm{u_{n + 1} - u_n} &\leq \frac{1}{2(k + 1)} + 2K \left( \frac{1}{8K(k + 1)} + \frac{1}{8K(k + 1)} \right) = \frac{1}{k + 1}. 
 \end{align*}
\end{proof}

As an immediate consequence we obtain for $\lambda_n=\lambda \in (0,2]$ and 
$\beta_n = 1-\frac1{n+1}$ the same quadratic rate of asymptotic regularity, given by \eqref{quadratic-DR-xn}, for all sequences $(x_n)$, $(u_n)$, $(y_n)$ and $(z_n)$.

\subsection{A Tikhonov-forward-backward algorithm}

Let $A:H\rras H$ be a maximally monotone operator, $B:H\to H$ be a $\beta$-cocoercive mapping for some $\beta > 0$ and $\gamma \in (0, 2\beta]$. 
Assume that $\text{zer}(A + B) \neq \emptyset$. 

The \textit{Tikhonov- forward-backward algorithm} is defined as follows:
\begin{equation}\label{Tfb-def}
 x_0\in H, \quad  x_{n + 1} = (1 - \lambda_n) u_n + \lambda_n \res{\gamma A}(u_n - \gamma B u_n),
\end{equation}
where $u\in H$, $(\beta_n)$ is a sequence in $[0, 1]$, $\alpha=\frac{2\beta}{4\beta - \gamma}\in\left(\frac12,1\right]$, 
$(\lambda_n)$ is a sequence in $\left[0, \frac1\alpha\right]$, and $u_n = (1 - \beta_n) u + \beta_n x_n$.

The forward–backward algorithm with Tikhonov regularization terms defined in \cite[Section 3]{BotCseMei19} is obtained by letting  $u=0$ in \eqref{Tfb-def}.

Let us define  
$$T = \res{\gamma A} (\text{Id} - \gamma B):H\to H.$$
By the proof of \cite[Theorem~7]{BotCseMei19}, the mapping $T$ is $\alpha$-averaged, hence  
$T = (1 - \alpha) \text{Id} + \alpha U$ for some nonexpansive mapping  $U : H \to H$. Furthermore,  by \cite[Proposition~26.1.(iv)]{BauCom17}, $Fix(T) = \text{zer}(A + B)$ hence $Fix(T)=Fix(U)\ne \emptyset$. 
One can easily see that 
$$ x_{n + 1} = (1 - \lambda_n) u_n + \lambda_n Tu_n = (1-\alpha\lambda_n)u_n+\alpha\lambda_n U(u_n).$$
Hence, $(x_n)$ is the Tikhonov-Mann iteration associated with the nonexpansive mapping $U$ and parameters $(\beta_n)$ and $(\alpha\lambda_n)$. 

It follows immediately that Proposition~\ref{As-reg-DR} holds identically for the Tikhonov- forward-backward iteration $(x_n)$ too. We have only to remark that   if $(C4_q)$ is true for 
$(\lambda_n)$, then it is true also for $(\alpha\lambda_n)$ with the same modulus $\ratecfourq$, as $\alpha \leq 1$. 
Furthermore, for $\lambda_n=\lambda \in \left[0, \frac1\alpha\right]$ and 
$\beta_n = 1-\frac1{n+1}$, we get that the mapping $\Sigma_0$ defined by \eqref{quadratic-DR-xn} is a quadratic rate of asymptotic regularity for $(x_n)$.

\end{document}